\newtheorem{theorem}{Theorem}[section]
\newtheorem{proposition}[theorem]{Proposition}
\newcommand{\cA}{\ensuremath{\mathcal{A}}}
\newcommand{\cB}{\ensuremath{\mathcal{B}}}
\newcommand{\cC}{\ensuremath{\mathcal{C}}}
\newcommand{\cD}{\ensuremath{\mathcal{D}}}
\newcommand{\cE}{\ensuremath{\mathcal{E}}}
\newcommand{\cF}{\ensuremath{\mathcal{F}}}
\newcommand{\cG}{\ensuremath{\mathcal{G}}}
\newcommand{\cS}{\ensuremath{\mathcal{S}}}
\newcommand{\bI}{\ensuremath{\mathbb{I}}}
\newcommand{\bP}{\ensuremath{\mathbb{P}}}
\newcommand{\bR}{\ensuremath{\mathbb{R}}}
\newcommand{\bU}{\ensuremath{\mathbb{U}}}
\newcommand{\DDM}{ \ensuremath{\texttt{DDM}}} 
\newcommand{\SDC}{ \ensuremath{\texttt{SDC}}} 
\newcommand{\NI}{\ensuremath{\underline{N}}}   
\newcommand{\nfi}{\ensuremath{\underline{n}}}  
\newcommand{\nFine}{\ensuremath{\underline{\underline n}}} 
\newcommand{\seq}{\ensuremath{\text{seq} }}
\newcommand{\eff}{\ensuremath{\textbf{eff} }}
\newcommand{\speedup}{\ensuremath{\textbf{speed-up} }}
\newcommand{\AP}{\ensuremath{\text{AP} }}
\newcommand{\CP}{\ensuremath{\text{CP} }}
\newcommand{\cost}{\ensuremath{\textbf{cost} }}
\newcommand{\eps}{\ensuremath{\varepsilon}}
\newcommand{\vepsdg}{\ensuremath{\varepsilon_{\cG}^{\phantom{\ast}} }}
\newcommand{\vepsdgs}{\ensuremath{\varepsilon_{\cG} }}
\newcommand{\om}[1]{\textcolor{black}{#1}}
\newcommand{\rev}[1]{\textcolor{black}{#1}}
\newcommand{\remember}[1]{\textcolor{black}{#1}}
\newcommand{\ym}[1]{\textcolor{black}{#1}}
\newcommand{\corr}[1]{\textcolor{black}{#1}}
\newcommand{\TheTitle}{An Adaptive Parareal Algorithm}
\title{\TheTitle}
\author{Y.~Maday, O.~Mula}
\date{}
\begin{document}

\maketitle

\footnotetext[1]{\corr{This work was funded by the CINE-PARA project ANR-15-CE23-0019.}}

\begin{abstract}
In this paper, we consider the problem of accelerating the numerical simulation of time dependent problems by time domain decomposition. The available algorithms enabling such decompositions present severe efficiency limitations and are an obstacle for the solution of large scale and high dimensional problems. Our main contribution is the  improvement of the parallel efficiency of the parareal in time method. The parareal method is based on combining predictions made by a numerically inexpensive solver (with coarse physics and/or coarse resolution) with corrections coming from an expensive solver (with high-fidelity physics and high resolution). At convergence, the algorithm provides a solution that has the fine solver's high-fidelity physics and high resolution. In the classical version, the fine solver has a fixed high accuracy which is the major obstacle to achieve a competitive parallel efficiency. In this paper, we develop an adaptive variant that overcomes this obstacle \corr{by dynamically increasing the accuracy of the fine solver across the parareal iterations.} We theoretically show that the parallel efficiency becomes very competitive \rev{in the ideal case where the cost of the coarse solver is small, thus proving that the only remaining factors impeding full scalability become the cost of the coarse solver and communication time.} \corr{The developed theory has also the merit of setting a general framework to understand the success of several extensions of parareal based on iteratively improving the quality of the fine solver and re-using information from previous parareal steps.} \rev{We illustrate the actual performance of the method in stiff ODEs, which are a challenging family of problems since the only mechanism for adaptivity is time and efficiency is affected by the cost of the coarse solver.}
\end{abstract}

%

\section{Introduction}
\label{sec:intro}
Solving complex models with high accuracy and within a reasonable computing time has motivated the search for numerical schemes that exploit efficiently parallel computing architectures.
\ym{In this paper, the model consists of a }  Partial Differential Equation (PDE) \ym{set on a domain $\cD$. In this context}, one of the main ideas to parallelize a simulation is to break the problem into subproblems defined over subdomains of a partition of \ym{$\cD$}.
The domain can potentially have high dimensionality and be composed of different variables like space, time, velocity or even more specific variables for some problems.
While there exist algorithms with very good scalability properties for the decomposition of the spatial variable in elliptic and saddle-point problems (see \cite{quarteroni1996domain} or  \cite{TW2005} for an overview), the same cannot be said for the decomposition of time of even simple systems of ODEs. This is despite the fact that research on time domain decomposition is currently very active and has by now a history of at least 50 years (back to at least \cite{Nievergelt1964}) during which several algorithms have been explored (see \cite{Gander2015} for an overview). As a consequence, time domain decomposition is to date only a secondary option when it comes to deciding what algorithm/method distributes the tasks in a parallel cluster.

The main goal of this work is to address this efficiency limitation in the framework of one particular scheme: the parareal in time algorithm. The method was first introduced in  \cite{Maday2001} and has been well accepted by the community because it is easily applicable to a relatively large spectrum of problems. (Some specific difficulties are nevertheless encountered on certain types of PDEs as reported in, e.g., \cite{Dai,Farhat2003} for hyperbolic systems or \cite{bal2008symplectic, dai2013symmetric} for hamiltonian problems). Another ingredient for its success is that, even though its scalability properties are limited, they are in general competitive in comparison with other methods. Without entering into very specific details of the algorithm at this stage, we can summarize the procedure by saying that we build iteratively a sequence to approximate the exact solution of the problem by a predictor-corrector algorithm. \om{At every iteration, predictions are made by a solver which has to be as numerically inexpensive as possible \ym{since it is run on the full time interval}. It usually involves coarse physics and/or coarse resolution. Corrections involve an expensive solver with high-fidelity physics and high resolution which is propagated in parallel over small time subdomains. In the classical version of parareal, the fine solver has a fixed high accuracy across all iterations. It is set to the one that we would use to solve the dynamics at the desired accuracy with a purely sequential solver. It is well-known that this point is the major obstacle to achieve better parallel efficiency. In this paper, we propose an adaptive variant where the accuracy of the fine solver is increased across the iterations. \remember{Our main goal is to show that this new point of view overcomes the obstacle of the cost of the fine solver and that the only remaining factors limiting high performance become the cost of the coarse solver and communication time.}} \ym{We refer to, e.g., \cite{carlberg2019data} for contributions on the lowering of the cost of that coarse solver.}

\om{
We present in section \ref{sec:algo} the new adaptive point of view. This requires to formulate an idealized version of the parareal algorithm in an infinite dimensional function space where the fine propagations are replaced by the exact ones (section \ref{sec:ideal}). Since this scheme is obviously not implementable in practice, we formulate a feasible ``perturbed'' version that involves approximations of the exact propagations at increasing accuracy across the iterations (section \ref{sec:feasible}). The accuracies are tightened in such a way that the feasible adaptive algorithm converges at the same rate as the ideal one and with a near-minimal numerical cost. The identified tolerances involve quantities that are difficult to estimate in practice. In addition, they may not be optimal because they are derived from a theoretical convergence analysis based on abstract conditions for the coarse and fine solvers. We bridge this gap between theory and actual implementation by proposing practical guidelines to set these tolerances. We next explain in section \ref{sec:adaptivity} how the new formulation invites to use adaptive schemes not only in the time variable, but also in other variables that may be involved in the dynamics. The performance of the algorithm could also be enhanced by re-using informations from previous iterations in order to limit the cost of internal solvers. The techniques for this will strongly depend on the nature of the specific problem. We discuss common situations in Appendix \ref{sec:enrich-input}. We close section \ref{sec:connection-classical} by listing the main advantages of the new framework and how the classical parareal \ym{paradigm} can be formulated with the optics of the new standpoint.
}

\om{The parallel performance of the adaptive scheme is difficult to predict a priori but in section \ref{sec:speedup} we carry a discussion where we show that it will always be superior to the classical approach. \remember{In the idealized situation where the cost of the coarse solver and \rev{communication delays are negligible}, we show that the algorithm would exhibit a very high parallel efficiency.}
}

\rev{
We emphasize that our theory is general in the sense that it is applicable to ODEs and also to PDEs involving time, space and possibly other variables. We defer to a future work the presentation of a numerical PDE example since it requires the deployment of space-time adaptive methods which is a challenging topic in itself and the techniques usually depend very specifically on the problem nature. Instead, we illustrate the performance of the algorithm on stiff ODEs in section \ref{sec:numerics}. They are a challenging family of problems because the only source of adaptivity is time and they do not allow to use a very inexpensive coarse solver. The tested ODEs are the Brusselator, the Van der Pol, the Oregonator equations, and an SEIR model which has very recently been proposed in \cite{covid-model-2020} to model the spread of the COVID-19 virus in the Wuhan city area. The code to reproduce our results (and experiment with other ODEs) is available online\footnote{Link to the code: \href{https://plmlab.math.cnrs.fr/mulahernandez/parareal-adaptive}{https://plmlab.math.cnrs.fr/mulahernandez/parareal-adaptive}}.
}
\rev{Our first two examples are relatively stiff, while the other two are highly stiff, and serve to illustrate the limitations of the approach. We show that in the relatively stiff problems, the adaptive parareal algorithm performs between 2 to 3 times better than the classical one when the solution is approximated at high accuracy. In addition, we confirm in these two examples that the only remaining obstacles to achieve a very competitive performance are the cost of the coarse solver and communication time between processors. Due to the nature of the algorithm, it is not clear how to overcome these limitations, especially the one coming from the coarse solver. This issue will come at the forefront for future research since, as our two highly stiff examples illustrate, the cost of the coarse solver may even prevent from obtaining any speed-up at all. We show that if we could find an inexpensive coarse solver, perhaps based on empirical data or on good back-of the enveloppe calculations, our adaptive algorithm would yield interesting speeds-ups even in highly stiff cases.}

In addition to the improvement in parallel efficiency \rev{(except, of course, in the above discussed extreme cases)}, the adaptive version of parareal brings an important conceptual novelty to the field of time domain decomposition which is the one of \emph{error controlled computations}. By this we mean that the deviation of the numerical result from the exact continuous solution is certifiably quantified and set to meet a given target accuracy with respect to a \emph{problem relevant norm}. This requires the formulation of the algorithm at an infinite dimensional level as is done in this paper. This point of view is fundamentally different from the fully discrete setting in which the parareal algorithm has always been thought of in practice. That is, we first fix a discretization (often on a uniform grid) and then speed-up the computation of the discrete evolution with parareal. In this way, we do not have any rigorous control of the error with respect to the actual continuous solution and we do not have any systematic procedure to minimize the number of degrees of freedom.

We conclude this introduction by some bibliographical remarks. To the best of our knowledge, the current abstract and broad formulation of an adaptive version of parareal has never been proposed in the literature. However, previous works have instantiated in a variety of particular applications the idea of re-using information from previous parareal iterations, either with the purpose of improving the quality of the fine solver or to build an initial guess of internal iterative routines. Among the most relevant ones stand the coupling of the parareal algorithm with spatial domain decomposition (see \cite{madayTuriniciDDM,guetatPhD,ABGM2017}), the combination of the parareal algorithm with iterative high order methods in time like spectral deferred corrections (see \cite{MW2008,Minion2010,MSBER2015}). \corr{Parareal has also been combined with multigrid iterative techniques which, in addition, involve a hierarchy of space-time meshes (see \cite{FFKMS2014,FMOS2017}).} In a similar spirit, we can also cite applications of the parareal algorithm to solve optimal control problems where information from previous steps is used (see \cite{madayTuriniciDDM,MST2007}). In appendix \ref{sec:enrich-input}, we briefly explain in what sense the \corr{above  strategies} can be seen as particular instances of the current approach and how our viewpoint could help to give them more solid theoretical foundations. \corr{The idea of re-using information as a starting guess for internal iterative solvers is also discussed in the Appendix. It has been explored in several different works, e.g.~\cite{FFKMS2014, MSBER2015}, and \cite{mulaPhD} provides a convergence analysis in simple situations (a more complete analysis will be proposed in a forthcoming work). }
\section{An adaptive parareal algorithm}
\label{sec:algo}
In this section, after introducing some preliminary notations in section \ref{sec:preliminaries}, we formulate an ideal parareal scheme on an infinite dimensional functional setting (section \ref{sec:ideal}). We then present feasible realizations involving a fine solver whose accuracy is adaptively increased across the iterations (section \ref{sec:feasible}). We prove that the feasible adaptive algorithm converges at the same rate as the ideal one provided that the tolerances of the fine solver are increased at certain rate which will be discussed. Finally, we discuss how the new paradigm can be realized thanks to adaptive schemes and\ym{/or} the re-use of information from previous steps (section \ref{sec:adaptivity} and appendix \ref{sec:enrich-input}). In section \ref{sec:connection-classical}, we connect the new adaptive formulation with the classical parareal algorithm and list the main advantages of the new standpoint.

\subsection{Setting and preliminary notations}
\label{sec:preliminaries} Let $\bU$ be a Banach space of functions defined over a domain 
$\Omega\subset\bR^d$ ($d\geq1$), e.g. $\bU = L^2(\Omega)$.
Let
\begin{equation*}
\cE:[0,T]\times[0,T]\times \bU\to \bU
\end{equation*}
be a propagator, that is, an operator such that, for any given time $t \in [0, T]$, $s \in [0,T-t]$ and any function $w\in \bU$, $\cE(t,s,w)$ takes $w$ as an initial value at time $t$ and propagates it at time $t+s$. We assume that $\cE$ satisfies the semi group property
\begin{equation*}
\cE(r,t-r,w)=\cE(s,t-s, \cE(r, s-r,w)  ),\quad \forall w\in \bU, \forall (r, s, t) \in[0,T]^3,\ r<s<t.
\end{equation*}
We further assume that $\cE$ is implicitly defined through the solution $u\in \cC^1([0,T],\bU)$ of the time-dependent problem
\begin{align}
\label{eq:pde}
u'(t) + \mathcal{A}\left(t,u(t)\right) = 0,\quad t\in [0, T],
\end{align}
where $\cA$ is an operator \ym{from} $[0,T]\times \bU$ \ym{into $\bU$ with adequate regularity we shall detail latter}. Then, given any $w\in \bU$, $\cE(t,s,w)$ denotes the solution to \eqref{eq:pde} at time $t+s$ with initial condition $w$ at time $t\ge 0$. In our problem of interest, we study the evolution given by \eqref{eq:pde} when the initial condition is $u(0)\in\bU$. Note that $\cE$ could also be associated to a discretized version of the evolution equation or be defined through an operator that is not necessary related to an evolution equation (see \cite{GG2018}).

Since, in general, the problem does not have an explicit solution, we seek to approximate it at a given  target accuracy. For any initial value $w\in \bU$, any $t\in[0,T[$, $s\in[0,T-t]$ and any $\zeta>0$ we denote by $[\cE(t,s,w);\zeta]$ an element of $\bU$ that approximates $\cE(t,s,w)$ such that we have
\begin{equation}
\label{eq:approxE}
\Vert \cE(t,s,w) - [\cE(t,s,w);\zeta] \Vert \leq \zeta\, s\, (1+\Vert w \Vert),
\end{equation}
where, here and in the following, $\Vert\cdot\Vert$ denotes the norm in $\bU$.
Any realization of $[\cE(t,s,w);\zeta]$ involves three main ingredients:
\begin{itemize}
\item[i)] a numerical scheme to discretize the time dependent problem \eqref{eq:pde}  (e.g.~an Euler scheme in time),
\item[ii)] a certain expected error size associated with the choice of the discretization (e.g.~error associated with the time step size of the Euler scheme),
\item[iii)] a numerical implementation to solve the resulting discrete systems (e.g.~conjugate gradient, Newton method, SSOR, \dots).
\end{itemize}
In the following, we will use the term \emph{solver} to denote a particular choice for i), ii) and iii). Given a solver $\cS$, we will use the same notation as for the exact propagator $\cE$ to express that $\cS(t,s,w)$ is an approximation of $\cE(t,s,w)$ with a certain accuracy $\zeta$. In other words, we can write
$ \cS(t,s,w) = 
[\cE(t,s,w);\zeta] $.

\subsection{An idealized version of the parareal algorithm}
\label{sec:ideal}
\corr{We introduce} a decomposition of the time interval $[0,T]$ into ${\NI}$  subintervals $[T_N, T_{N+1}]$, $N=0,\dots, {\NI}-1$. Without loss of generality, we will take them of uniform size $\Delta T= T /\NI$ which means that  $T_N = N \Delta T$ for $N=0,\dots, {\NI}$. For a given target accuracy $\eta>0$, the primary goal of the parareal in time algorithm is to build an approximation $\tilde u(T_N)$ of $u(T_N)$ such that
\begin{equation}
\label{eq:primaryGoal}
\max_{\ym{1}\leq N\leq \NI} \Vert u(T_N)-\tilde u(T_N)\Vert \leq \eta.
\end{equation}
 The classical way to achieve this is to set 
$$\tilde u(T_N)=\cS_{\seq}(0,T_N,u(0))=[\cE(0,T_N,u(0));\zeta],\quad \ym{1}\leq N \leq \NI,$$
where $\cS_{\seq}$ is some sequential solver in $[0,T]$ with $\zeta = \eta/(T (1+ \Vert u(0)\Vert))$ in (\ref{eq:approxE}).
Since this comes at the cost of solving over the whole time interval $[0,T]$,
the main goal of the parareal in time algorithm is to speed up the computing time, while maintaining the same target accuracy $\eta$. This is made possible by first decomposing the computations over the time domain. Instead of solving over $[0,T]$, we perform $\NI$ parallel solves over \ym{each} interval \ym{$(T_N, T_{N+1}]$}of size $\Delta T$. We next introduce an idealized version of it which will not be feasible in practice but will be the starting point of subsequent implementable versions. The algorithm relies on the use of a solver $\cG$ (known as the coarse solver) with the following properties involving the operator
\begin{equation*}
\delta \cG\coloneqq \cE-\cG.
\end{equation*}
\noindent
\textbf{Hypotheses (H)}: There exists constants $\vepsdg,\ C_c,\ C_d>0$ such that for any function $x,\ y \in \bU$ and for any $t\in[0,T[$ and $s \in[0,T-t]$,
\begin{subequations}\label{eq:hypH}
\begin{align}
&
\cG(t,s,x)= [\cE(t,s,x),\vepsdg] 
\quad\Leftrightarrow\quad
\|\delta\cG(t,s,x)\|\le s  (1+\|x\|) \vepsdg \label{eq:hyp1g}\\
&
\|\cG(t,s,x) - \cG(t,s,y)  \|\le (1+ C_c s) 
\|x-y\|,\label{eq:hyp3}\\
&
\|\delta\cG(t,s,x)-\delta\cG(t,s,y) \|\le C_d
s \vepsdg  \|x-y\| \label{eq:hyp4}
\end{align}
\end{subequations}
Note that these hypothesis are the classical abstract formulations of the properties of numerical schemes related to stability and accuracy. Hypothesis \eqref{eq:hyp3} is a Lipschitz condition and the quantity $\vepsdg$ is a small constant which, in the case of a Euler scheme, would be equal to the time step size.

The idealized version of the algorithm consists in building iteratively a series $(y^N_k)_k$ of approximations of $u(T_N)$ for $0\leq N\leq\NI$ following the recursive formula
\begin{equation}
\label{eq:paraPPexact}
\begin{cases}
\begin{aligned}
y_0^{N+1}
&=  \cG(T_N,\Delta T,y^N_0),\ &0\leq N \leq \underline{N}-1 \\
y^{N+1}_{k+1} 
&=  \cG(T_N,\Delta T,y^{N}_{k+1})  +\cE(T_N,\Delta T,y^{N}_{k})\\
&\qquad - \cG(T_N,\Delta T,y^{N}_{k}),\ &0\leq N \leq \underline{N}-1, \quad k\geq 0, \\
y_0^0
&= u(0).
\end{aligned}
\end{cases}
\end{equation}
At this point, several comments are in order. The first one is that the computation of $y^N_k$ only requires propagations with $\cE$ over intervals of size $\Delta T$. As follows from \eqref{eq:paraPPexact}, for a given iteration $k$, $\NI$ propagations of this size are required, each of them over distinct intervals $[T_N,T_{N+1}]$ of size $\Delta T$, each of them with independent initial conditions. Since they are independent from each other, they can be computed over $\NI$ parallel processors and the original computation over $[0,T]$ is decomposed into \ym{parallel computations over } $\NI$ subintervals of size $\Delta T$. The second observation is that the algorithm may not be implementable in practice because it involves the exact propagator $\cE$. Feasible instantiations consist of replacing $\cE(T_N,\Delta T,y^{N}_{k})$ by some approximation $[\cE(T_N,\Delta T,y^{N}_{k}),\zeta_k^N]$ with a certain accuracy $\zeta_k^N$ which has to be carefully chosen. We will come to this point in the next section. The third observation is to note that, in the current version of the algorithm, for all $N=0,\dots,\NI$, the exact solution $u(T_N)$   is obtained after exactly $k=N$ parareal iterations. This number can be reduced when we only look for an approximate solution with accuracy $\eta$. \om{Depending on the problem, the final number of iterations $K(\eta)$ can actually be much smaller than $\NI$. The convergence result of theorem \ref{th:convPPexact} and its proof are helpful to understand the main mechanisms driving the convergence of the algorithm and explaining its behavior.} To present it, we introduce the shorthand notation for the error norm
\begin{equation*}
E^N_k \coloneqq \Vert u(T_N)-y^N_k \Vert,\quad k\geq 0,\ 0\leq N \leq \NI,
\end{equation*}
and the quantities
\begin{equation*}
\mu =  \frac{{e^{C_c T}}}{C_d} \max_{0\leq N\leq \NI} (1+\Vert u(T_N)\Vert)  , 
\quad\text{and}\quad
\tau\coloneqq C_d  T  e^{ - C_c  \Delta T} \varepsilon_{\cG}.
\end{equation*}

\begin{theorem}
\label{th:convPPexact}
If $\cG$ and $\delta \cG$ satisfy Hypothesis \eqref{eq:hypH}, then,
\begin{equation}
\label{eq:convPlainBound}
\max_{0\leq N\leq \NI} 
\Vert u(T_N)-y^N_k \Vert
\leq \mu   \frac{\tau^{k+1}}{(k+1)!} ,\, \quad\forall k\geq 0.
\end{equation}
\end{theorem}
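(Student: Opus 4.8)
The plan is to reduce everything to a single scalar two-index recursion for $E^N_k$ and then to solve it by induction, recovering the superlinear (factorial) decay characteristic of parareal. The whole argument hinges on the semigroup property, which applied to the exact solution gives $u(T_{N+1}) = \cE(T_N,\Delta T,u(T_N))$, so that each exact value $u(T_N)$ is a one-step fixed point of the propagator. The left boundary is exact as well: since $u(0)$ is known, $y^0_k = u(0)$ and hence $E^0_k = 0$ for every $k$.

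First I would derive the fundamental recursion. Inserting $\cE = \cG + \delta\cG$ both in $u(T_{N+1}) = \cE(T_N,\Delta T,u(T_N))$ and in the term $\cE(T_N,\Delta T,y^N_k)$ of \eqref{eq:paraPPexact}, and subtracting, the two coarse contributions $\pm\,\cG(T_N,\Delta T,y^N_k)$ cancel, leaving
\[ u(T_{N+1}) - y^{N+1}_{k+1} = \big[\cG(T_N,\Delta T,u(T_N)) - \cG(T_N,\Delta T,y^N_{k+1})\big] + \big[\delta\cG(T_N,\Delta T,u(T_N)) - \delta\cG(T_N,\Delta T,y^N_k)\big]. \]
Bounding the first bracket with the Lipschitz estimate \eqref{eq:hyp3} and the second with the small-Lipschitz estimate \eqref{eq:hyp4} produces
\[ E^{N+1}_{k+1} \le (1+C_c\Delta T)\,E^N_{k+1} + C_d\Delta T\,\vepsdg\,E^N_k. \]
The same manipulation on the coarse prediction $y_0^{N+1} = \cG(T_N,\Delta T,y_0^N)$, using \eqref{eq:hyp1g} in place of \eqref{eq:hyp4}, gives the source-driven row $E^{N+1}_0 \le (1+C_c\Delta T)E^N_0 + \Delta T(1+\|u(T_N)\|)\vepsdg$. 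This cancellation is the conceptual core of the proof; everything afterwards is bookkeeping.

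Next I would solve the system. Writing $\alpha = 1+C_c\Delta T$ and $\beta = C_d\Delta T\,\vepsdg$, I would prove by induction on $k$ that
\[ E^N_k \le \frac{\max_{0\le M\le\NI}(1+\|u(T_M)\|)}{C_d}\,\beta^{k+1}\binom{N}{k+1}\alpha^{N-k-1}. \]
The base case $k=0$ follows by unrolling the coarse row from $E^0_0=0$ and estimating $\sum_{j=0}^{N-1}\alpha^j \le N\alpha^{N-1}$. For the step, substituting the row-$k$ bound into the fundamental recursion yields a scalar recursion in $N$ with zero initial value whose solution is $\alpha^{N-k-2}\sum_{j=0}^{N-1}\binom{j}{k+1}$; the hockey-stick identity $\sum_{j=0}^{N-1}\binom{j}{k+1} = \binom{N}{k+2}$ collapses this exactly into the row-$(k+1)$ bound, so no slack is lost in the induction.

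Finally I would pass to the stated estimate via $\binom{N}{k+1} \le N^{k+1}/(k+1)!$, the identity $N\Delta T = T_N \le T$, and $\alpha^{N-k-1} \le e^{C_c\Delta T(N-k-1)} \le e^{C_c\Delta T(\NI-k-1)} = e^{C_c T}e^{-C_c\Delta T(k+1)}$. Regrouping the factor $e^{C_c T}$ with the constant gives exactly $\mu$, while each of the $k+1$ copies of $\vepsdg\,C_d\,T\,e^{-C_c\Delta T}$ assembles into $\tau^{k+1}$, reproducing $\mu\,\tau^{k+1}/(k+1)!$. I expect the only real obstacle to be careful accounting of the powers of $\alpha$: they must line up so that the hockey-stick identity applies cleanly in the induction and so that the final exponential regrouping matches the precise definitions of $\mu$ and $\tau$.
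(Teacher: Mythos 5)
Your proof is correct, and the key step -- writing $\cE=\cG+\delta\cG$ in both the parareal update and in $u(T_{N+1})=\cE(T_N,\Delta T,u(T_N))$ so that the coarse terms in $y^N_k$ cancel and one gets $E^{N+1}_{k+1}\le(1+C_c\Delta T)E^N_{k+1}+C_d\Delta T\,\vepsdg\,E^N_k$ with the source-driven row at $k=0$ -- is exactly the paper's. Where you diverge is in how the two-index recursion is solved: the paper majorizes $E^N_k$ by a sequence $e^N_k$ and works with the generating functions $\rho_k(\xi)=\sum_N e^N_k\xi^N$, obtaining $\rho_k(\xi)=\gamma\alpha^k\xi^{k+1}/\bigl((1-\xi)(1-\beta\xi)^{k+1}\bigr)$ and then extracting coefficients after replacing $1-\xi$ by $1-\beta\xi$; you instead run a direct induction on $k$ and collapse the inner sum with the hockey-stick identity $\sum_{j=0}^{N-1}\binom{j}{k+1}=\binom{N}{k+2}$. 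The two routes land on the identical intermediate bound $E^N_k\le\gamma\alpha^k\beta^{N-k-1}\binom{N}{k+1}$ (in the paper's notation for $\alpha,\beta,\gamma$), and even the slack is the same: the paper's substitution of $1-\xi$ by $1-\beta\xi$ is precisely your estimate $\sum_{j=0}^{N-1}\alpha^j\le N\alpha^{N-1}$ in the base case, while your induction step and the paper's coefficient extraction are both exact. Your version is more elementary (no formal power series) and makes it transparent where the only loss occurs; the generating-function version is the one that generalizes more mechanically to the perturbed recursion with the extra source terms $g_{k-1}$ used in Theorem~\ref{th:convAP}, which is presumably why the paper adopts it. The final passage from the binomial bound to $\mu\,\tau^{k+1}/(k+1)!$ via $\binom{N}{k+1}\le N^{k+1}/(k+1)!$ and $(1+C_c\Delta T)^{N-k-1}\le e^{C_cT}e^{-C_c\Delta T(k+1)}$ is again identical in both.
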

\begin{proof}
The proof is in the spirit of existing results from the literature (see \cite{Maday2001,Bal2003,MRS, GH2008}) but it is instructive to give it for subsequent developments in the paper. We introduce the following quantities
\begin{equation}
\label{eq:abg}
\begin{cases}
\alpha&\coloneqq C_d \vepsdg \Delta T \\
\beta&\coloneqq 1+C_c \Delta T \\
 \gamma&\coloneqq \Delta T \vepsdg  \max_{0\leq N\leq \NI} (1+\Vert u(T_N)\Vert)
 \end{cases}
\end{equation}
as shorthand notations for the proof.

If $k=0$, using definition \eqref{eq:paraPPexact} for $y^N_0$, we have for $0\leq N \leq \NI-1$, 
\begin{align*}
E^{N+1}_0
&= \Vert  y^{N+1}_0 - u(T_{N+1}) \Vert  \\
&= \| \cG(T_N,\Delta T,y^N_0) -  \cE(T_N,\Delta T,u(T_{N}))\| \\
&\leq \Vert \cG(T_N,\Delta T,y^N_0) - \cG(T_N,\Delta T,u(T_N))  \Vert
+ \Vert \cG(T_N,\Delta T,u(T_N)) - \cE(T_N,\Delta T,u(T_{N}))  \Vert \\
&\leq (1+C_c\Delta T) E^{N}_0 + \Delta T  \vepsdg (1+\Vert u(T_N)\Vert)\\
&\leq \beta E^N_0 + \gamma,
\end{align*}
where we have used \eqref{eq:hyp1g} and \eqref{eq:hyp3} to derive the second to last inequality.

For $k\geq1$, starting from \eqref{eq:paraPPexact}, we have
 \begin{align*}
y_k^{N+1} - u(T_{N+1})
&= \cG(T_N,\Delta T,y^{N}_{k}) + \cE(T_N,\Delta T,y^{N}_{k-1}) - \cG(T_N,\Delta T,y^{N}_{k-1}) - \cE(T_N,\Delta T,u(T_N)) \\
&=\cG(T_N,\Delta T,y^N_k) -  \cG(T_N,\Delta T,u(T_N))+ \delta\cG (T_N,\Delta T,y^{N}_{k-1}) -\delta\cG(T_N,\Delta T,u(T_N)).
\label{paraRB::eq::paraErr} \nonumber
\end{align*}
Taking norms and using \eqref{eq:hyp3}, \eqref{eq:hyp4}, we derive
\begin{equation*}
E_{k}^{N+1} 
\le \beta E^{N}_{k} + \alpha  E^{N}_{k-1},
\end{equation*}
Following \cite{GH2008}, we consider the sequence $(e^N_k)_{N,k\geq0}$ defined recursively as follows. For $k=0$,
\begin{equation}
\label{3M2}
e_0^N =
\begin{cases}
0, &\quad\text{if } N=0\\
\beta e^{N-1}_0+\gamma, &\quad\text{if } N\geq 1
\end{cases}
\end{equation}
and for $k\geq 1$,
\begin{equation}
\label{3M1}
e_k^N =
\begin{cases}
0, &\quad\text{if } N=0\\
\alpha e^{N-1}_{k-1}+ \beta e^{N-1}_k, &\quad\text{if } N\geq 1.
\end{cases}
\end{equation}
Since $E^N_k \leq e^N_k$ for $k\geq 0$ and $N=0,\dots, \NI$, we analyze the behavior of $(e^{N}_{k})$ to derive a bound for $E^N_k$. For this, we consider the generating function
$$
\rho_k(\xi) = \sum_{N\ge 0} e^{N}_{k} \xi^N.
$$
From \eqref{3M2} and \eqref{3M1} we get
\begin{equation*}
\begin{cases}
\rho_k(\xi) = \alpha \xi \rho_{k-1}(\xi)  + \beta\xi \rho_k(\xi),\quad k\geq 1\\
\rho_0(\xi) = \gamma\frac{\xi}{1-\xi} + \beta\xi \rho_0(\xi),
\end{cases}
\end{equation*}
from which we derive 
\begin{equation*}
\rho_k(\xi) = \gamma\alpha^k\frac{\xi^{k+1}}{(1-\xi)}\frac{1}{(1-\beta\xi)^{k+1}},\quad k\geq 0.
\end{equation*}
Since, $\beta\ge 1$, we can bound the term $(1-\xi)$ in the denominator by $(1-\beta\xi)$. Next, using the binomial expansion
\begin{equation}
\frac{1}{(1-\beta\xi)^{k+2}} = \sum_{j\ge 0} {k+1+j\choose j} \beta^j \xi^j
\label{eq:binomial}
\end{equation}
and identifying the term in $\xi^{N}$ in the expansion, we derive the bound
\begin{equation*}
e^N_k \le \gamma \alpha^k \beta^{N-k-1} {N\choose k+1}.
\end{equation*}
Hence, using definition \eqref{eq:abg} for $\alpha,\ \beta$ and $\gamma$,
\begin{equation*}
E^N_k\leq e^N_k \le \frac{{(1+C_c\Delta T)^{N-k-1} \max_{0\leq N\leq \NI} (1+|| u(T_N)\Vert)}  }{C_d (k+1)!}   \bigl[C_d   \varepsilon_{\cG} e^{- C_c \Delta T} T_N \Bigr]^{k+1},
\end{equation*}
which ends the proof of the theorem.
\end{proof}
Note that \ym{at least one} step  is not sharp in the above proof: it is the step where $1-\xi$ is replaced by $1-\beta\xi$. 
Note also that $\tau$ is the quantity driving convergence and its speed.

\corr{Introducing the quantity
$$
{\bar \varepsilon}_{\cG} \coloneqq \frac{e^{  C_c \Delta T}}{C_d  T },
$$
we can write
$$
\tau = \frac{ {\varepsilon}_{\cG} }{{\bar \varepsilon}_{\cG}}
$$
and we note that a sufficient condition to converge is that
\begin{equation}
\label{eq:sufficientG}
\tau<1
\quad\Leftrightarrow\quad
\vepsdg <   {\bar \varepsilon}_{\cG} .
\end{equation}
In other words, ${\bar \varepsilon}_{\cG}$ is the minimal accuracy that the coarse solver has to satisfy in order to guarantee convergence of the ideal parareal algorithm. In the following, we will work under the assumption that $\vepsdg$ satisfies \eqref{eq:sufficientG}.
}

\corr{
As we will see in the next section, ${\bar \varepsilon}_{\cG}$ plays also a critical role in certain convergence properties of the perturbed algorithm so we finish this section by discussing the behavior of ${\bar \varepsilon}_{\cG}$ depending on several scenarios. First, $C_c$ and $C_d$ are Lipschitz constants (fixed by the properties of the evolution problem) so they could be potentially large numbers. As a result, ${\bar \varepsilon}_{\cG}$ could be a large number and condition \eqref{eq:sufficientG} would not be very stringent. The value of ${\bar \varepsilon}_{\cG}$ can be small for very long time simulations where $T$ becomes large or if $\Delta T$ becomes small compared to $C_c$ (that is, if the number $\NI$ of processors becomes large).
}

\subsection{Feasible realizations of the parareal algorithm}
\label{sec:feasible}
Feasible versions of algorithm \eqref{eq:paraPPexact} involve approximations of $\cE(T_N,\Delta T,y^{N}_{k})$ with a certain accuracy $\zeta_k^N$. This leads to consider algorithms of the form
\begin{equation}
\label{eq:paraAP}
\begin{cases}
\begin{aligned}
y_0^{N+1}
&=  \cG(T_N,\Delta T,y^N_0),\ &0\leq N \leq \underline{N}-1 \\
y^{N+1}_{k+1} 
&=  \cG(T_N,\Delta T,y^{N}_{k+1})+[\cE(T_N,\Delta T,y^{N}_{k});\zeta^N_k]  \\
&\qquad- \cG(T_N,\Delta T,y^{N}_{k}),\quad &0\leq N \leq \underline{N}-1,\ k\geq 0, \\
y_0^0
&= u(0).
\end{aligned}
\end{cases}
\end{equation}
Since no feasible version will converge at a better rate than \eqref{eq:convPlainBound}, we analyze here what is the minimal accuracy $\zeta_k^N$ that preserves it. A result in this direction is given in the following theorem. It requires to introduce the quantity
\begin{equation*}
\nu_p \coloneqq
\frac{\max_{0\leq N\leq \NI} (1+\Vert y^N_{p}\Vert )}{\max_{0\leq N\leq \NI} (1+\Vert u(T_N)\Vert) },\quad \forall p\geq0.
\end{equation*}
\ym{which tends to $1$ as $p\to\infty$.}
\begin{theorem}
\label{th:convAP}
Let $\cG$ and $\delta \cG$ satisfy Hypothesis \eqref{eq:hypH}. Let $k\geq0$ be any given positive integer. If for all $0\leq p <k$ and all $0\leq N < \NI$, the approximation $[\cE(T_N,\Delta T,\zeta^N_p)]$ has accuracy
\begin{equation}
\zeta^N_p\leq \zeta_p
\coloneqq  \frac{\eps_{\cG}^{p+2}}{(p+1)! \nu_p},
\label{eq:zeta}
\end{equation}
then the $(y^N_k)_{N}$ of the feasible parareal scheme \eqref{eq:paraAP} satisfy
\begin{equation}
 \label{eq:convFadaptBoundbis}
\max_{0\leq N\leq \NI}  
\Vert u(T_N)-y^N_k \Vert
\leq
 \mu
\frac{\tilde \tau^{k+1}}{(k+1)!},
\end{equation}
with
\begin{align*}
\tilde \tau\coloneqq \corr{ \tau + \eps_{\cG} }. 
\end{align*}
\end{theorem}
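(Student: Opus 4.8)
The plan is to mirror the proof of Theorem \ref{th:convPPexact}, tracking the extra error committed by replacing $\cE(T_N,\Delta T,y^N_k)$ by its approximation. First I would write $[\cE(T_N,\Delta T,y^N_k);\zeta^N_k]=\cE(T_N,\Delta T,y^N_k)+r^N_k$, where by \eqref{eq:approxE} the residual obeys $\|r^N_k\|\le \zeta^N_k\,\Delta T\,(1+\|y^N_k\|)$. Repeating verbatim the telescoping of the exact case — adding and subtracting $\cG(T_N,\Delta T,u(T_N))$ and invoking the Lipschitz bounds \eqref{eq:hyp3} and \eqref{eq:hyp4} on $\cG$ and $\delta\cG$ — the only new contribution is this residual, so with $\alpha,\beta,\gamma$ as in \eqref{eq:abg} the error satisfies the perturbed recursion
\begin{equation*}
E^{N+1}_{k+1}\le \beta E^N_{k+1} + \alpha E^N_k + \|r^N_k\|,\qquad E^{N+1}_0\le\beta E^N_0+\gamma,
\end{equation*}
the $k=0$ line carrying no perturbation since $y^N_0$ uses only $\cG$.

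The decisive simplification comes from the prescribed tolerance. Inserting $\zeta^N_k\le\zeta_k=\eps_{\cG}^{k+2}/((k+1)!\,\nu_k)$ from \eqref{eq:zeta} and bounding $1+\|y^N_k\|\le\max_{0\le N\le\NI}(1+\|y^N_k\|)=\nu_k\max_{0\le N\le\NI}(1+\|u(T_N)\|)$, the factor $\nu_k$ cancels exactly and I obtain the clean, $N$-independent estimate $\|r^N_k\|\le \frac{\eps_{\cG}^{k+1}}{(k+1)!}\,\gamma$. This cancellation is precisely the reason for the $\nu_p$ appearing in \eqref{eq:zeta}.

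Next I would introduce the majorant sequence $(e^N_k)$ defined by the recursion above taken with equality and $e^0_k=0$, so that $E^N_k\le e^N_k$, and pass to the generating functions $\rho_k(\xi)=\sum_{N\ge0}e^N_k\xi^N$. The $k=0$ line reproduces $\rho_0(\xi)=\gamma\xi/((1-\xi)(1-\beta\xi))$, while for $k\ge1$ the new source term yields
\begin{equation*}
\rho_k(\xi)=\frac{\alpha\xi}{1-\beta\xi}\,\rho_{k-1}(\xi)+\frac{\eps_{\cG}^k}{k!}\,\frac{\gamma\xi}{(1-\xi)(1-\beta\xi)}.
\end{equation*}
Unrolling this linear recursion expresses $\rho_k$ as a finite sum over $j=0,\dots,k$ of terms $\frac{\eps_{\cG}^j}{j!}\,\gamma\alpha^{k-j}\,\frac{\xi^{k-j+1}}{(1-\xi)(1-\beta\xi)^{k-j+1}}$. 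Bounding $(1-\xi)^{-1}$ by $(1-\beta\xi)^{-1}$ (valid since $\beta\ge1$) and reading off the coefficient of $\xi^N$ through the binomial expansion \eqref{eq:binomial}, each term reproduces the exact-case estimate with $k$ replaced by $k-j$, giving
\begin{equation*}
e^N_k\le \mu\sum_{j=0}^{k}\frac{\eps_{\cG}^j}{j!}\,\frac{\tau^{k-j+1}}{(k-j+1)!}.
\end{equation*}

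The step I expect to be the heart of the argument is recognizing the remaining sum as a truncated binomial convolution: reindexing by $i=k-j+1$ turns it into $\sum_{i=1}^{k+1}\frac{\eps_{\cG}^{k+1-i}}{(k+1-i)!}\frac{\tau^i}{i!}$, which is the full convolution $\frac{(\tau+\eps_{\cG})^{k+1}}{(k+1)!}$ minus its $i=0$ term, hence bounded by $\frac{(\tau+\eps_{\cG})^{k+1}}{(k+1)!}=\frac{\tilde\tau^{k+1}}{(k+1)!}$. This immediately gives \eqref{eq:convFadaptBoundbis}. The remaining work is purely bookkeeping: making the $\nu_k$ cancellation exact, and verifying that after the reindexing the binomial theorem applies cleanly so that the accumulated perturbations across iterations assemble into the single perturbed rate $\tilde\tau=\tau+\eps_{\cG}$.
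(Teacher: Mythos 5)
Your proposal is correct and follows essentially the same route as the paper: the same perturbed recursion $E^{N+1}_{k}\le \beta E^N_{k}+\alpha E^N_{k-1}+g_{k-1}$, the same majorant sequence and generating functions with the $(1-\xi)\to(1-\beta\xi)$ replacement, and the same final binomial convolution assembling the terms into $(\tau+\eps_{\cG})^{k+1}/(k+1)!$. The only cosmetic difference is that you insert the bound $g_{k-1}\le \frac{\eps_{\cG}^{k}}{k!}\gamma$ (via the $\nu_k$ cancellation) before unrolling the generating-function recursion, whereas the paper carries $g_{k-1-\ell}$ symbolically and substitutes it at the end.
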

Let us make a couple of remarks before giving the proof of the theorem. \corr{First, the sufficient condition to converge is now
\begin{equation}
\tilde \tau<1
\quad\Leftrightarrow\quad
\vepsdg <  \frac{ {\bar \varepsilon}_{\cG}}{ 1 + {\bar \varepsilon}_{\cG} }
\end{equation}
so the minimal accuracy required for the coarse solver is stronger than in \eqref{eq:sufficientG} for the ideal case. Note however that when ${\bar \varepsilon}_{\cG}$ is small (roughly, ${\bar \varepsilon}_{\cG} \leq 1$), the condition on $\vepsdg$ is similar in the ideal and perturbed case.}

\corr{
Second, comparing \eqref{eq:convPlainBound} and \eqref{eq:convFadaptBoundbis}, the rate of convergence $\tilde \tau$ of the feasible parareal algorithm deviates from $\tau$, the ideal one, by a factor 
$$
\frac{\tilde \tau}{\tau} = \frac{\tau+\varepsilon_{\cG}}{\tau} = 1 + \frac{e^{C_c\Delta T}}{ C_d T} = 1 + {\bar \eps}_{\cG}.
$$
The parameter $ {\bar \eps}_{\cG}$ plays again a critical role in the convergence properties and determines whether convergence is close to the ideal rate $\tau$, or deviates from it by a potentially important factor.
}

%
\begin{proof}
The proof follows the same lines as the one for theorem \ref{th:convPPexact} and $E^N_k$, $\alpha,\ \beta,\ \gamma$ are defined exactly as before. In addition, it will be useful to introduce the sequence
\begin{equation*}
\begin{cases}
g_k &= \zeta_{k}\Delta T \max_{0\leq N\leq \NI}(1+\Vert y_{k}^N\Vert ),\quad \forall k\geq 0 \\
g_{-1}&= \gamma
\end{cases}
\end{equation*}
We concentrate on the case $k\geq 1$ since the case $k=0$ is identical as in theorem \ref{th:convPPexact}. For  $k\geq 1$, using \eqref{eq:paraAP}, we have
 \begin{align*}
y_k^{N+1} - u(T_{N+1})
&= \cG(T_N,\Delta T,y^{N}_{k}) -  \cG(T_N,\Delta T,u(T_N))- \cG(T_N,\Delta T,y^{N}_{k-1}) \nonumber\\
&\qquad+ \cG(T_N,\Delta T,u(T_N)) + [\cE(T_N,\Delta T,y^{N}_{k-1});\zeta^N_{k-1}]
-\cE(T_N,\Delta T,u(T_N)) \nonumber\\
&=\cG(T_N,\Delta T,y^N_k) -  \cG(T_N,\Delta T,u(T_N))+ \delta\cG (T_N,\Delta T,y^{N}_{k-1}) \nonumber\\
&\qquad -\delta\cG(T_N,\Delta T,u(T_N)) +[\cE(T_N,\Delta T,y^{N}_{k-1});\zeta^N_{k-1}] - \cE(T_N,\Delta T,y^{N}_{k-1}).
\label{paraRB::eq::paraErr} \nonumber
\end{align*}
Taking norms, using \eqref{eq:hyp3}, \eqref{eq:hyp4} and the definition \eqref{eq:approxE} applied to $[\cE(T_N,\Delta T,y^{N}_{k-1});\zeta^N_{k-1}]$, we derive
\begin{align*}
E_{k}^{N+1}   
&\leq [1+C_c \Delta T] E^{N}_{k} +  C_d\Delta T \vepsdg   E^{N}_{k-1} + \zeta^N_{k-1}\Delta T \max_{0\leq N\leq \NI}(1+\Vert y_{k-1}^N\Vert ) \\
&\leq \beta E^{N}_{k} + \alpha E^{N}_{k-1} + g_{k-1}.
\end{align*}
Similarly to theorem \ref{th:convPPexact}, we introduce
the sequence $(\tilde e^N_k)_{N,k\geq0}$ defined for $k=0$ as $\tilde e_0^N = e_0^N$ for all $N\geq 0$ and for $k\geq 1$,
\begin{equation*}
\tilde e_k^N =
\begin{cases}
0, &\quad\text{if } N=0\\
\alpha\tilde e^{N-1}_{k-1} +\beta\tilde e^{N-1}_{k}+g_{k-1}, &\quad\text{if } N\geq 1
\end{cases}
\end{equation*}
The associated generating function $\tilde\rho_k$ satisfies
\begin{equation*}
\begin{cases}
\tilde\rho_k(\xi) &= \alpha \xi\tilde\rho_{k-1} (\xi) + \beta \xi \tilde\rho_k(\xi) + g_{k-1} \frac{\xi}{1-\xi}, \quad \forall k\geq 1, \\
\tilde\rho_{0} (\xi) &= \rho_{0} (\xi) = \frac{\gamma\xi}{(1-\xi)(1-\beta\xi)}.
\end{cases}
\end{equation*}
Hence 
\begin{align*}
\tilde\rho_k(\xi)
&= \left(\frac{\alpha \xi}{1-\beta\xi}\right)\tilde\rho_{k-1} (\xi)  + \frac{\xi}{(1-\xi)(1-\beta\xi)} g_{k-1} \\
&= \left(\frac{\alpha \xi}{1-\beta\xi}\right)^k \tilde\rho_{0} (\xi)  + \frac{\xi}{(1-\xi)(1-\beta\xi)} \sum_{\ell=0}^{k-1}   \left(\frac{\alpha \xi}{1-\beta\xi}\right)^\ell   g_{k-1-\ell}
\end{align*}
By replacing again at the denominator the factor $(1-\xi)$ by $(1-\beta\xi)$ and using the binomial expansion \eqref{eq:binomial}, we derive the bound
\begin{align*}
\tilde \rho_k(\xi) 
&\leq \gamma\alpha^k\xi^{k+1} \sum_{j\ge0} {k+1+j\choose j} \beta^j\xi^j + \sum_{\ell=0}^{k-1} \alpha^\ell\xi^{\ell+1}g_{k-1-\ell}\sum_{j\ge0}{\ell+1+j\choose j} \beta^j\xi^j, \\
 &=  \sum_{j\ge0} \sum_{\ell=0}^{k}  \alpha^\ell g_{k-1-\ell}  \beta^j {\ell+1+j\choose j}  \xi^{\ell+1+j}
\end{align*}
where we have used that $g_{-1}= \gamma$.
The coefficient associated to the term $\xi^N$ above gives the inequality
\begin{equation*}
\tilde e^N_k \leq \sum_{\ell=0}^{k} \alpha^\ell g_{k-1-\ell}\beta^{N-\ell-1}{N\choose \ell+1},\quad \forall k\geq 1,
\end{equation*}
From the definition of $\zeta_\ell$, we have that
$$
g_\ell \leq \frac{\Delta T \varepsilon_{\cG}^{\ell+2}}{(\ell+1) !}\max_{0\leq N\leq \NI} (1+\Vert u(T_N)\Vert).
$$
Therefore, recalling the definition \eqref{eq:abg} of $\alpha,\ \beta$ and $\gamma$, we derive
\begin{align*}
\tilde e^N_k
&\leq \frac{\varepsilon_{\cG}^{k+1}\max_{0\leq N\leq \NI} (1+\Vert u(T_N)\Vert) }{C_d} \sum_{\ell=0}^k (C_d \Delta T)^{\ell+1} \frac{(1+C_c \Delta T)^{N-\ell-1}}{(k-\ell)!}  {N\choose \ell+1} \\
&\leq \frac{\varepsilon_{\cG}^{k+1}\max_{0\leq N\leq \NI} (1+\Vert u(T_N)\Vert) }{C_d}  \sum_{\ell=0}^k \left(C_d T e^{-C_C\Delta T}\right)^{\ell+1} \frac{e^{C_C T}}{(\ell+1) ! (k-1-\ell) !} \\
&\leq   \frac{\max_{0\leq N\leq \NI} (1+\Vert u(T_N)\Vert) e^{C_cT}}{C_d (k+1)!}\left( (1+C_d T e^{-C_c\Delta T}) \varepsilon_{\cG}\right)^{k+1}\corr{=   \mu \frac{\tilde \tau ^{k+1}}{(k+1)!},}
\end{align*}
\corr{where we have used the definition of $\mu$ and $\tilde \tau$ in the last line.} This inequality ends the proof since $E^N_k\leq \tilde e^N_k$ for $N=0,\dots,\NI$.
\end{proof}
\subsection{Practical realization of $[\cE(T_N,\Delta T,y^N_k),\zeta^N_k]$}
\label{sec:adaptivity}
Since the accuracy $\zeta^N_k$ \ym{needs to}  improve with $k$, the most natural way to build the approximations $[\cE(T_N,\Delta T,y^N_k),\zeta^N_k]$ is with adaptive techniques and with adaptive refinements at every step $k$. The implementation ultimately rests on the use of a posteriori error estimators. It opens the door to local time step adaptation in the parareal algorithm as well as spatial coarsening or refinement if the problem involves additional spatial variables.

In principle, as $\zeta^N_k$ decreases with $k$, the numerical cost increases in terms of degrees of freedom and also in terms of computing time. \ym{This actually reveals the key  idea of this new approach which is that we would like that only the last fine solver is expensive and the cost of the previous ones is a small fraction of the cost of the last one (we refer to the next sub-section for a more precise statement)}. By re-using information from previous iterations, we can limit the cost of internal solvers required in $[\cE(T_N,\Delta T,y^N_k),\zeta^N_k]$ and enhance the speed-up. This depends of course on the nature of the specific problem. We discuss several common situations in Appendix \ref{sec:enrich-input}.

\subsection{Connection to the classical formulation of the parareal algorithm and advantages of the current view-point}
\label{sec:connection-classical}
In the original version of the algorithm, $\cE(T_N,\Delta T,y^{N}_{k})$ is approximated with an accuracy $\zeta^N_k = \zeta_{\cF}$ which is kept \emph{constant} in $N$ and across the parareal iterations $k$. This has usually been done by using a solver $\cF$ defined in the same spirit as $\cG$, but satisfying Hypothesis \eqref{eq:hypH}  with a better accuracy $\eps_\cF <\vepsdg$. We have in this case
\begin{equation*}
[\cE(T_N,\Delta T,y^{N}_{k});\zeta_{\cF}]=\cF(T_N,\Delta T,y^{N}_{k})
\end{equation*}
and we recover the classical algorithm (see \cite{baffico2002parallel} and \cite{Bal2002})
\begin{equation*}
\begin{cases}
\begin{aligned}
y_0^{N+1}
&=  \cG(T_N,\Delta T,y^N_0),\ &0\leq N \leq \underline{N}-1 \\
y^{N+1}_{k+1} 
&=  \cG(T_N,\Delta T,y^{N}_{k+1}) \\
&\qquad+\cF(T_N,\Delta T,y^{N}_{k}) - \cG(T_N,\Delta T,y^{N}_{k}),\ &0\leq N \leq \underline{N}-1, \quad k\geq 0, \\
y_0^0
&= u(0).
\end{aligned}
\end{cases}
\end{equation*}
Compared to this classical version of the parareal algorithm, the adaptive approach offers the following important advantages:
\begin{enumerate}
\item The algorithm \emph{converges to the exact solution} $u(T_N)$ and not to the solution achieved by the fixed chosen fine solver $\cF(0,T_N,u(0))$ \ym{(indeed, for any $N$, $\zeta_k^N\longrightarrow 0$ as $k\longrightarrow\infty$)}.
\item For a final target accuracy $\eta$, the parallel efficiency will always be superior to the classical approach (see section \ref{sec:speedup}).
\item \corr{We minimize the computational ressources (degrees of freedom)} because we identify the minimal required accuracies at each iteration (see equation \eqref{eq:zeta}). Early iterations use a loose tolerance, thus avoiding unnecessary work due to oversolving, while later iterations use tighter tolerances to deliver accuracy.
\item The dynamical refinements of the fine solver invite to incorporate adaptive solvers with a posteriori error estimators to the parareal scheme. 
\end{enumerate}

\section{\om{Parallel efficiency}}
\label{sec:speedup}
It is difficult to give accurate a priori estimations for the speed-up and efficiency of the method due to its adaptive nature so the actual performance can only be established through relevant examples. In section \ref{sec:numerics}, we give some results for the case of the Brusselator system. Despite this difficulty in estimation, we make some general remarks in this section, which aim primarily at highlighting the relevance of the cost of the coarse solver. The speed-up is defined as the ratio
\begin{equation}
\speedup_{\AP / \seq}(\eta,[0,T])
\coloneqq
\frac{\cost_{\seq}(\eta,[0,T])}{\cost_\AP(\eta,[0,T])}
\label{eq:speed-up}
\end{equation}
between the cost to run a sequential fine solver achieving a target accuracy $\eta$ with the cost to run an adaptive parareal algorithm providing at the end the same target accuracy $\eta$. 
The parallel efficiency of the method is then defined as the ratio of the above speed up with the number of processor which gives a target of 1 to any parallel solver:
\begin{equation*}
\eff_{\AP / \seq}(\eta,[0,T])
\coloneqq
\frac {\speedup_{\AP / \seq}(\eta,[0,T])}{ \NI}.
\end{equation*}

Assume that $g_k^N$ and $f_k^N$ are the numerical costs to realize $\cG(T_N,\Delta T,y^N_k)$ and $[\cE(T_N,\Delta T,y^N_k),\zeta^N_k]$. Since the tolerances $\zeta^N_k$ decrease with $k$, we have $f_0^N< \dots <  f_{K(\eta)}^N$. Neglecting the communication delays, the cost of the adaptive solver is
$$
\cost_\AP(\eta,[0,T]) = \sum_{k=0}^{K(\eta)} \sum_{N=0}^{\NI-1} g_k^N + \sum_{k=0}^{K(\eta)-1} \sum_{N=0}^{\NI-1} f_{k}^N.
$$
The classical parareal algorithm involves, at every iteration $k\in \{0,...,K(\eta)\}$ propagations at the highest accuracy $\zeta^N_{K(\eta)}$. Thus its cost is
$$
\cost_\CP(\eta,[0,T]) = \sum_{k=0}^{K(\eta)} \sum_{N=0}^{\NI-1} g_k^N + K(\eta) \sum_{N=0}^{\NI-1} f_{K(\eta)}^N,
$$
from which it directly follows that \ym{(at least if, with obvious notation, $K_{AP}(\eta) = K_{CP}(\eta)$)}
$$
\eff_{\AP / \seq}(\eta,[0,T]) > \eff_{\CP / \seq}(\eta,[0,T]).
$$
Thus the parallel performance of the adaptive algorithm is at least the one of the classical version. Note that this holds even when communications are not negligible since there is the same amount of information exchange in both algorithms.

We next give a more quantitative statement on an admittedly idealized setting. Assume that the cost of the coarse solve is negligible, that there is no communication delay and that the cost to realize $[\cE(T_N,\Delta T,y^N_k),\zeta^N_k]$ is
\begin{equation}
\label{eq:costModel}
f_k^N =  \Delta T (\zeta^N_k)^{-1/\alpha}.
\end{equation}
This assumption for the cost is, for instance, reasonable when we use an explicit time-stepping method of order  $\alpha>0$. It would also hold for an implicit method where a direct solver can be used. Note that $\alpha$ could actually depend on $k$ but we stick to this simple model for clarity of exposition.

\begin{proposition} 
\label{prop:ideal}
If $f_k^N = \Delta T (\zeta^N_k)^{-1/\alpha},$ for some $\alpha>0$ and if the cost of the coarse solver is negligible with respect to $f_k^N$ for any $k\geq0$, then
\begin{equation*}
\eff_{\AP / \seq}(\eta,[0,T])=\frac{1-\tau^{1/\alpha}}{1-\tau^{K(\eta)/\alpha}} \sim \frac{1}{(1+\vepsdgs^{1/\alpha})}.
\end{equation*}
Therefore
\begin{equation*}
\speedup_{\AP / \seq}(\eta,[0,T]) \sim   \NI \frac{1}{(1+\vepsdgs^{1/\alpha})}.
\end{equation*}
\end{proposition}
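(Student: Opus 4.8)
The plan is to reduce $\eff_{\AP/\seq}$ to the ratio between the cost of the single finest fine solve and the total \emph{parallel} cost of all the fine solves, and then to evaluate that ratio from the cost model and the geometric decay of the fine tolerances. First I would write the two costs appearing in \eqref{eq:speed-up}. Because the coarse solver and the communication are neglected and, at each iteration, the $\NI$ fine propagations run simultaneously on the $\NI$ processors, the wall-clock cost of the adaptive run collapses to one fine solve per iteration (taking the common per-interval tolerance $\zeta_k^N=\zeta_k$),
\[
\cost_\AP(\eta,[0,T]) \simeq \sum_{k=0}^{K(\eta)-1} f_k, \qquad f_k = \Delta T\,\zeta_k^{-1/\alpha}.
\]
The reference sequential solver reaches the same final accuracy $\eta$ by applying the finest solver (the one used at the last parareal iteration, with tolerance $\zeta_{K(\eta)-1}$) over all $\NI$ subintervals in sequence, hence $\cost_\seq(\eta,[0,T]) \simeq \NI\,f_{K(\eta)-1}$. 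The factor $\NI$ then cancels against the definition of the efficiency, leaving
\[
\eff_{\AP/\seq}(\eta,[0,T]) = \frac{\speedup_{\AP/\seq}(\eta,[0,T])}{\NI} = \frac{f_{K(\eta)-1}}{\sum_{k=0}^{K(\eta)-1} f_k}.
\]

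Second, I would feed in the geometric behaviour of the tolerances. At leading order the accuracy requested at iteration $k$ must contract by the convergence factor $\tau$ at each step, so that $\zeta_k \propto \tau^{k}$ (the dominant content of \eqref{eq:zeta}, the factorial and $\nu_k$ only rendering the later solves relatively cheaper). Through the cost model this turns the $f_k$ into a geometric sequence $f_k = f_0\,q^{k}$ with ratio $q = \tau^{-1/\alpha}>1$. Summing the finite geometric series and using $f_{K-1}=f_0 q^{K-1}$ (writing $K=K(\eta)$),
\[
\eff_{\AP/\seq} = \frac{q^{K-1}(q-1)}{q^{K}-1} = \frac{1-q^{-1}}{1-q^{-K}} = \frac{1-\tau^{1/\alpha}}{1-\tau^{K(\eta)/\alpha}},
\]
which is the first equality in the statement.

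Finally, I would pass to the asymptotic regime. Requiring higher accuracy forces $\eta\to 0$ and hence $K(\eta)\to\infty$; since $\tau<1$ the denominator tends to $1$ and $\eff_{\AP/\seq}\to 1-\tau^{1/\alpha}$. Writing $1-\tau^{1/\alpha}=(1-\tau^{2/\alpha})/(1+\tau^{1/\alpha})$ and discarding the higher-order term $\tau^{2/\alpha}$ gives $\eff_{\AP/\seq}\sim 1/(1+\tau^{1/\alpha})$, and replacing $\tau=\varepsilon_{\cG}/\bar\varepsilon_{\cG}$ by $\varepsilon_{\cG}$ (i.e.\ absorbing the factor $\bar\varepsilon_{\cG}$ into the $\sim$) yields the announced $1/(1+\varepsilon_{\cG}^{1/\alpha})$; multiplying back by $\NI$ gives the speed-up. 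The routine part is the geometric summation; the genuine obstacles are the two modeling reductions underneath it, namely justifying that the fine costs really grow geometrically with ratio $\tau^{-1/\alpha}$ in spite of the factorial in \eqref{eq:zeta}, and establishing the reference identity $\cost_\seq\simeq \NI\,f_{K(\eta)-1}$, that is, that the finest parareal tolerance matches, up to constants, the tolerance a purely sequential solver would need to deliver accuracy $\eta$.
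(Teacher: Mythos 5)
Your proposal is correct in substance and shares the paper's overall architecture (total adaptive cost $=$ sum of per-iteration fine costs, dominated by the last one; sequential reference cost $=\NI$ times the finest solve; take the ratio), but the decisive step --- evaluating $\sum_{k} f_k$ --- is done by a genuinely different route. The paper does not model the tolerances as geometric: it substitutes the actual factorial tolerances \eqref{eq:zeta} into the cost model, so that consecutive costs satisfy $f_{k-1}/f_{k}\approx(\vepsdg/(k+1))^{1/\alpha}$, i.e.\ they decay \emph{super}-geometrically backwards; summing the resulting series gives directly $\cost_\AP\le \Delta T\,\zeta_{K(\eta)-1}^{-1/\alpha}(1+\vepsdgs^{1/\alpha})$, and the paper then reaches the speed-up by comparing against the classical parareal cost $K(\eta)\,\Delta T\,\zeta_{K(\eta)-1}^{-1/\alpha}$ and invoking $\speedup_{\CP/\seq}=\NI/K(\eta)$ (equivalent to your $\cost_\seq\simeq\NI f_{K(\eta)-1}$). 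Your geometric ansatz $\zeta_k\propto\tau^k$ is not what \eqref{eq:zeta} prescribes, and it yields the correction factor $1+\tau^{1/\alpha}$ rather than $1+\vepsdgs^{1/\alpha}$; you then have to absorb the factor $\bar\varepsilon_{\cG}$ into the $\sim$, which is harmless when $C_dTe^{-C_c\Delta T}=\cO(1)$ but not in general (e.g.\ large $T$). What your route buys is an actual derivation of the closed-form first equality $(1-\tau^{1/\alpha})/(1-\tau^{K(\eta)/\alpha})$, which the paper's own proof never establishes (it only proves the asymptotic $\sim 1/(1+\vepsdgs^{1/\alpha})$); what the paper's route buys is fidelity to the tolerances actually used by the algorithm and a cleaner appearance of $\vepsdg$ (rather than $\tau$) in the final answer. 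The two "obstacles" you flag at the end are real and are exactly the points where the paper, too, argues only up to unquantified constants.
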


\begin{proof}
The cost of the scalable adaptive parareal scheme after $K(\eta)$ iterations is
\begin{equation}
\cost_{\AP}(\eta,[0,T])=\Delta T \sum_{k=0}^{K(\eta)-1} \zeta_k^{-1/\alpha} 
\end{equation}

Since we are in a range where the scheme converges, the quantity $\max_{0\leq N\leq \NI}\Vert y^N_k\Vert$ is bounded and thus there exists $0<\underline c\leq 1 \leq \overline c$  such that $\underline c   \leq \nu_k \leq \overline c $ for all $k\geq0$. We will account for this with the notation $\nu_k\sim 1$. Note that in fact $\underline c$ and $\overline c$ are close to one.
Let us start with the simple case $\alpha=1$ and denote $\underline K = K(\eta)-1$:
\begin{equation*}
\cost_{\AP}(\eta,[0,T])=\Delta T \sum_{k=0}^{\underline K} \zeta_k^{-1} =\Delta T \zeta_{K(\eta)-1}^{-1} \left(1 + \frac{\vepsdg}{\underline K} +\frac{\vepsdgs^2}{\underline K(\underline K-1)} +\dots + \frac{\vepsdgs^{\underline K-1}}{\underline K!} \right)
\end{equation*}
we thus derive
\begin{equation*}
\cost_{\AP}(\eta,[0,T])\le \Delta T  {\zeta_{K(\eta)-1}^{-1}} (1+\vepsdg)
\end{equation*}
In the general case ($\alpha>1$), the same is true with  
\begin{equation*}
\cost_{\AP}(\eta,[0,T])\le \Delta T  {\zeta_{K(\eta)-1}^{-1/\alpha}} (1+ \vepsdgs^{1/\alpha})
\end{equation*}
Up to this last factor, the current conclusion is that the global cost of the parareal procedure is equal to the last fine solver on each sub-interval with size $\Delta T$ (both the coarse and the previous fine propagations are negligible).

Since the accuracy that is obtained at the end of the parareal procedure (see (\ref{eq:convFadaptBoundbis})) is of the same order as the accuracy provided with classical parareal solver (\ym{compare with} (\ref{eq:convPlainBound})), it follows that
if we now take the last target accuracy $\zeta_{K(\eta)-1}^{-1/\alpha}$ of the adaptive algorithm as the accuracy of the fine scheme in the classical parareal algorithm, the cost would be
\begin{equation*}
\cost_\CP(\eta,[0,T])= K(\eta) \Delta T  {\zeta_{K(\eta)-1}^{-1/\alpha}},
\end{equation*}
Therefore,
\begin{equation}
\label{eq:ap-cp}
\frac{\cost_\AP(\eta,[0,T])}{\cost_{\CP}(\eta,[0,T])}
\sim \frac{1}{K(\eta)} (1+\vepsdgs^{1/\alpha}).
\end{equation}
In addition, we know that when the cost of the coarse solver is negligible,
\begin{equation}
\label{eq:sp-cp}
\speedup_{\CP/\seq}(\eta,[0,T])
=\frac{\cost_{\seq}(\eta,[0,T])  }{ \cost_\CP(\eta,[0,T]) }
= \frac{\NI}{K(\eta)},
\end{equation}
Dividing  \eqref{eq:sp-cp} by \eqref{eq:ap-cp} yields
\begin{equation*}
\speedup_{\AP / \seq}(\eta,[0,T]) \sim \NI \frac{1}{(1+\vepsdgs^{1/\alpha})}
\end{equation*}
and
\begin{equation*}
\eff_{\AP / \seq}(\eta,[0,T]) \sim \frac{1}{(1+\vepsdgs^{1/\alpha})}.
\end{equation*}
\end{proof}
\om{In the ideal setting of Proposition \ref{prop:ideal}:
\begin{itemize}
\item The parallel efficiency of the adaptive parareal algorithm does not depend on \emph{the final number of iterations}. This is in contrast to the classical version whose efficiency decreases with the final number of iterations $K(\eta)$ as $1/K(\eta)$.
\item The efficiency behaves like $1- o(\vepsdg)$ in the adaptive version, and $o(\vepsdg)$ rapidly goes to zero with $\vepsdg$. As soon as $\vepsdg$ becomes negligible \ym{with respect to } 1, we will be in the range of full scalability.
\end{itemize}
We emphasize that, obviously, the above idealized setting will never hold in practice, but the result is interesting in its own right since it highlights that the cost of the fine solver is no longer the main obstacle for full scalability in the adaptive setting: the cost of the coarse solver becomes now the major obstruction towards full efficiency.}

\section{\om{Numerical tests}}
\label{sec:numerics}

\subsection{Guidelines for a practical implementation}
\paragraph*{Practical choice of $\zeta_k^N$} Formula \eqref{eq:zeta} of the convergence analysis of section \ref{sec:feasible} gives an estimate for  $\zeta_k^N$ that one could in principle use for the implementation. However, these tolerances may not be optimal because they are derived from a theoretical convergence analysis based on abstract conditions for the coarse and fine solvers. This was confirmed during our numerical tests where we observed that using estimates \eqref{eq:zeta} for $\zeta_k^N$ did not deliver satisfactory \ym{enough} results. This is the reason why it is necessary to devise a practical rule to set $\zeta_k^N$. We have explored the following choice: if $\eta$ is the final target accuracy, the classical parareal algorithm is usually run with a solver that delivers a slightly higher accuracy, say $\eta/2$. Assume that the classical algorithm converges in $K_{\CP}(\eta)=K$ iterations. We propose to build the tolerances of $\zeta_k^N$ in such a way \ym{to target} that $K_{\AP}(\eta) = K_{\CP}(\eta)$ and such that the cost of the last fine propagation is of the order of the sum of the previous ones. This motivates to set
$$
\zeta_k^N =
\begin{cases}
 \eps_{\cG}^{1-\frac{k+1}{K}} \left(\frac{\eta}{2}\right)^{\frac{k+1}{K}},&\quad \text{if } k<K \\
 \eta/2,&\quad \text{if } k\geq K.
 \end{cases}
$$
The numerical example of the next section uses these tolerances.

\paragraph*{Load balancing:} For simplicity of exposition, the algorithm has so far been discussed  for $\NI$ subintervals of uniform size $\Delta T$. However, this decomposition may lead to a task imbalance because some time intervals may have more complex dynamics than others, requiring more degrees of freedom, thus more computational time. In order to balance tasks as efficiently as possible, we dynamically adapt the size of the $\NI$ subintervals in a way to have the fine solver propagations as balanced as possible among processors.

\subsection{Results for several stiff ODEs}
 \rev{
We apply our adaptive algorithm to several stiff ODEs where the only mecanism for adaptivity is time. Our results illustrate that our approach improves the speed-up and efficiency with respect to the classical non-adaptive parareal method. We also show that the main element affecting performance is no longer the cost of the fine solver but the cost of the coarse solver. In extreme cases, this cost may even prevent any speed-up at all (see section \ref{sec:extreme}) and puts this obstruction at the forefront for future research. The code to reproduce the numerical results is available online at:
\begin{center}
\href{https://plmlab.math.cnrs.fr/mulahernandez/parareal-adaptive}{https://plmlab.math.cnrs.fr/mulahernandez/parareal-adaptive}
\end{center}
Other ODEs can easily be tested as indicated in the instructions.
}

\rev{Note that the algorithm could also be applied to PDEs but we defer the presentation of numerical examples to future works since this requires full space-time adaptive techniques which are a topic in itself since they are challenging to formulate and deploy and very specific to each type of problem.}

\subsubsection{The Brusselator system}

We consider the brusselator system
\begin{align*}
\begin{cases}
x' = A+x^2y-(B+1)x \\
y' =Bx-x^2y,
\end{cases}
\end{align*}
with initial condition $x(0)=0$ and $y(0)=1$. This is a stiff ODE that models a chain of chemical reactions. It was already studied in a previous work on the parareal algorithm (see \cite{GH2008}). The system has a fixed point at $x=A$ and $y=B/A$ which becomes unstable when $B>1+A^2$ and leads to oscillations. We place ourselves in this oscillatory regime by setting $A=1$ and $B=3$. The dynamics present large velocity variations in some time subintervals, making the use of adaptive time-stepping schemes particularly desirable for an appropriate treatment of the transient.

For the coarse solver, we set $$\eps_{\cG}=0.1,$$ and use an explicit Runge Kutta method of order 5 with an adaptive time-stepping (see \cite{DP1980}). 
For the fine solver, we use the implicit Runge-Kutta method of the Radau IIA family of order 5 with adaptive time-stepping (see \cite{HW1996,HW1999}). Both integrators are available in the ODE integration library of Scipy\footnote{\url{https://docs.scipy.org/doc/scipy/reference/generated/scipy.integrate.solve_ivp.html}} which we have used \rev{in our library.}

As already discussed, the target accuracies $\zeta_k^N$ should be ensured by rigorous a posteriori error estimators. \corr{However, these type of estimators are unfortunately not available in the Scipy library and we are not aware of any mainstream library with this capability. As a surrogate, we have used the above mentioned classical ODE integrators that only guarantee \emph{local} accuracy between time-steps $t_n \to t_{n+1}$, but not \emph{global} accuracy between macro intervals $[T_N, T_{N+1}]$ (composed of several time-steps). The local accuracy can be specified in the library routine via the parameters \texttt{atol} and \texttt{rtol} of the function \texttt{scipy.integrate.solve_ivp}. To relate this local accuracy control to the global one, we have built a priori a ``chart'' mapping accuracies of the solver on macro-intervals against the tolerance parameters \texttt{atol} and \texttt{rtol} of the library. To simplify, these two parameters have been set to be equal ($\texttt{atol}= \texttt{rtol}$) and their value is fixed according to the chart. As an example, we provide a chart for $T=20$ for the scheme of the fine solver in Figure \ref{fig:chart}. The dots are computed values: for a given value of the parameter $\texttt{atol}$, we examine the accuracy $\eps$ of the solver. We then interpolate the points with a cubic spline interpolation. This way, for a given intermediate accuracy $\zeta^N_k$ in the parareal algorithm, we can easily adapt the parameter value $\texttt{atol}$ that is required.}

\begin{figure}
\begin{center}
\includegraphics[scale=0.4]{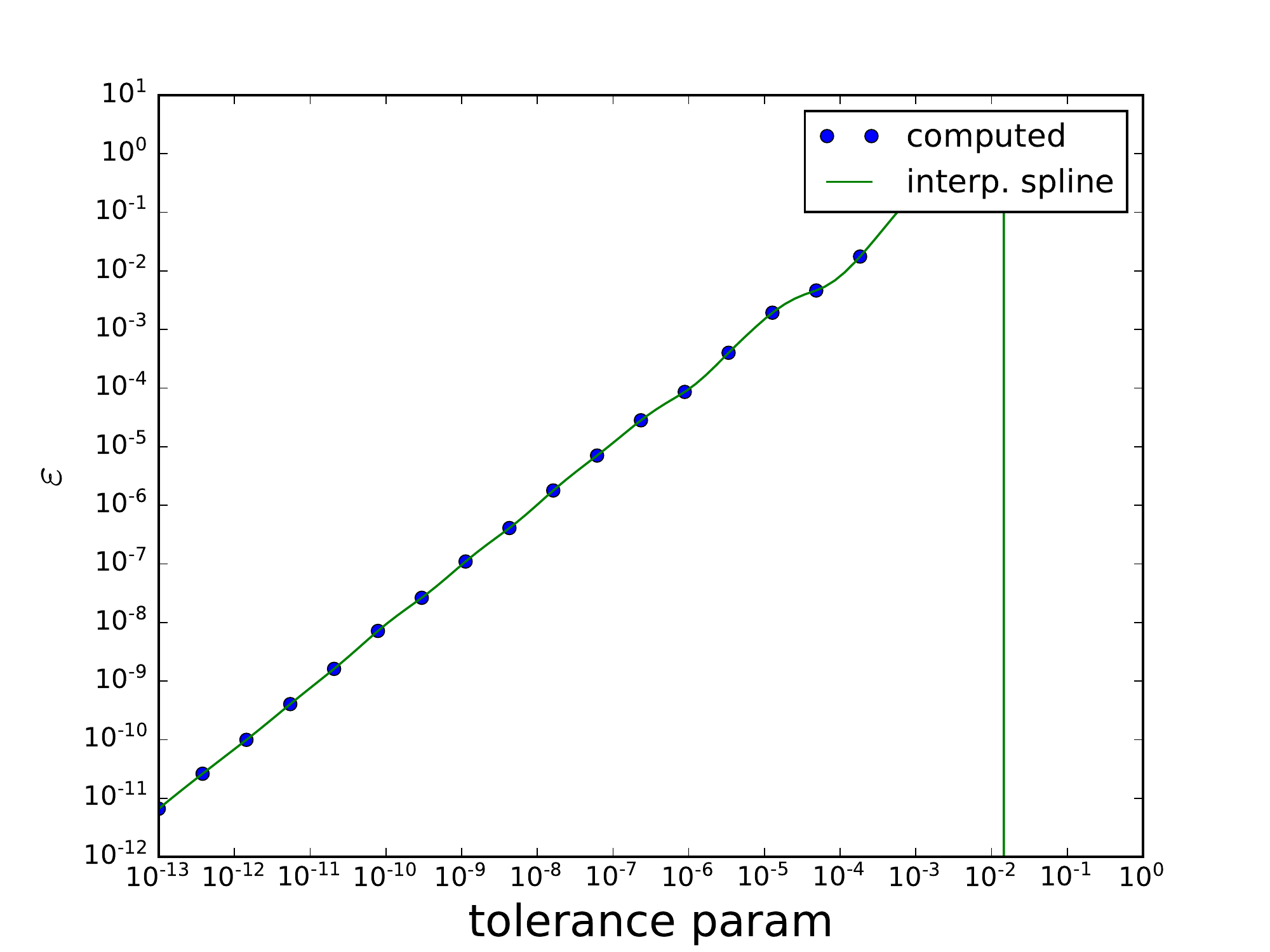}
\caption{Mapping of the accuracies $\varepsilon$ against the tolerance parameters ($\texttt{atol}= \texttt{rtol}$) of the library. The dots are computed values: for a given value of the parameter, we examine the accuracy $\eps$ of the solver. We then interpolate the points with a cubic spline interpolation. This way, for a given intermediate accuracy $\zeta^N_k$ in the algorithm, we can infer the parameter value $\texttt{atol}$ and $\texttt{rtol}$.  Case $T=20$, integrator of the fine solver.}
\label{fig:chart}
\end{center}
\end{figure}

We use formula \eqref{eq:speed-up} to compare the speed-up of the classical and adaptive parareal algorithm in terms of the number of \ym{ operations} involved in the numerical solution (communication delays have not been taken into account). For the costs $g^N_k$ and $f^N_k$, we take into account:
\begin{itemize}
\item the number of time steps (which is adaptively increased as we tightned the accuracy),
\item the number of right-hand side evaluations,
\item for the fine solver, we additionally count the number of evaluations of the Jacobian matrix and of the number of linear system inversions.
\end{itemize}
In Figure \ref{fig:speedup}, we plot the obtained speed-up for different configurations:
\begin{itemize}
\item the final time $T$ varies from $100$ to $900$,
\item the final target accuracy is $\eta=10^{-6}$ or $\eta=10^{-8}$,
\item the number of processors $\NI$ varies from 10 to 100.
\end{itemize}
As anticipated in section \ref{sec:speedup}, the speed-up of the adaptive parareal is always superior to the one of the classical parareal. We observe that the gain is marginal for a moderate accuracy ($\eta=10^{-6}$) but it is about 2.5 times larger for $\eta=10^{-8}$. Note that sometimes the speed-up does not increase monotonically as the number of processors $\NI$ increases. Also, the speed-up generally increases with $\NI$ but the increase is rather moderate.

\begin{figure}
    \centering
    \begin{subfigure}[b]{0.45\textwidth}
        \includegraphics[width=\textwidth]{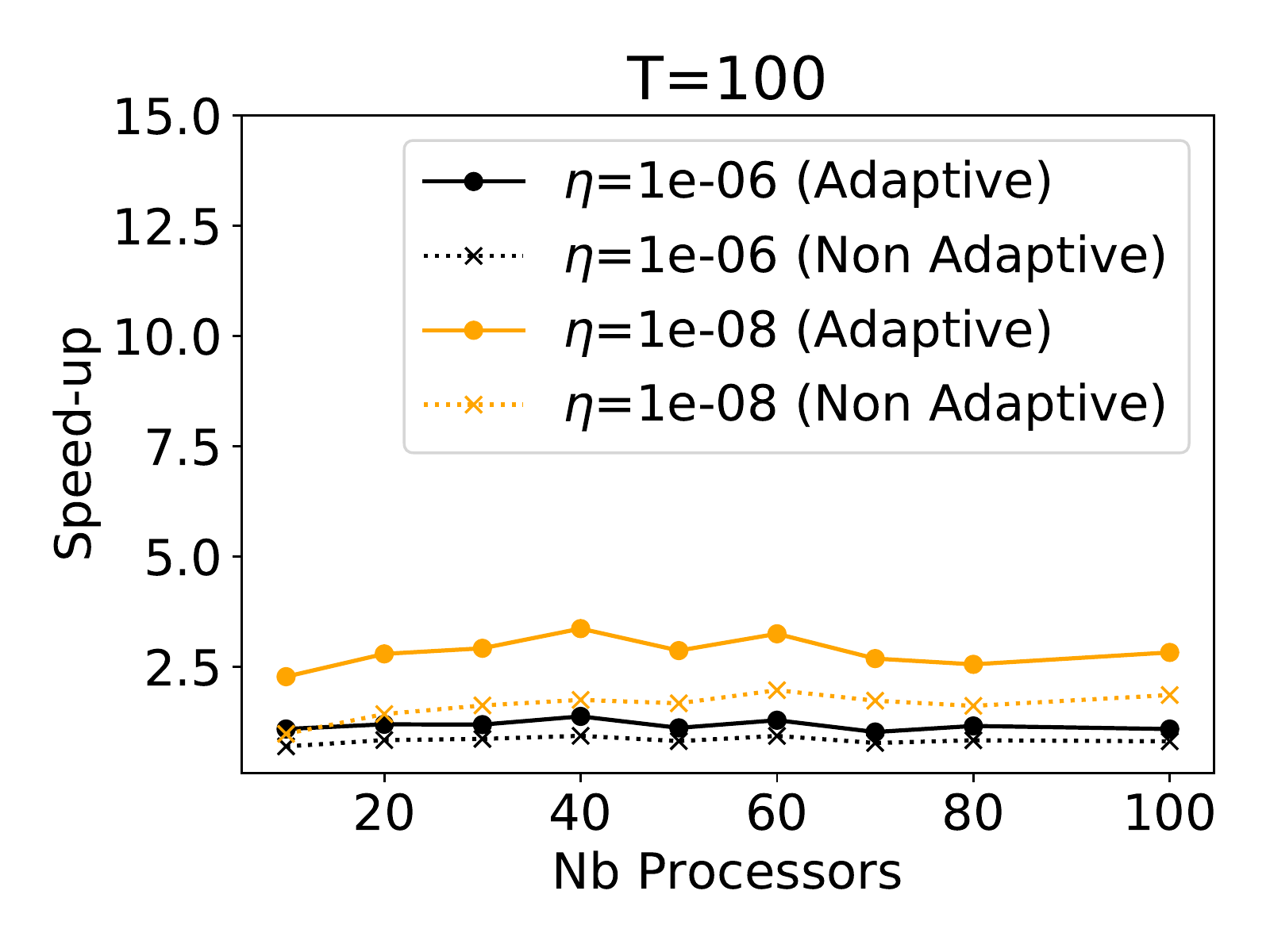}
        \label{fig:spT100}
    \end{subfigure}
    ~ 
    \begin{subfigure}[b]{0.45\textwidth}
        \includegraphics[width=\textwidth]{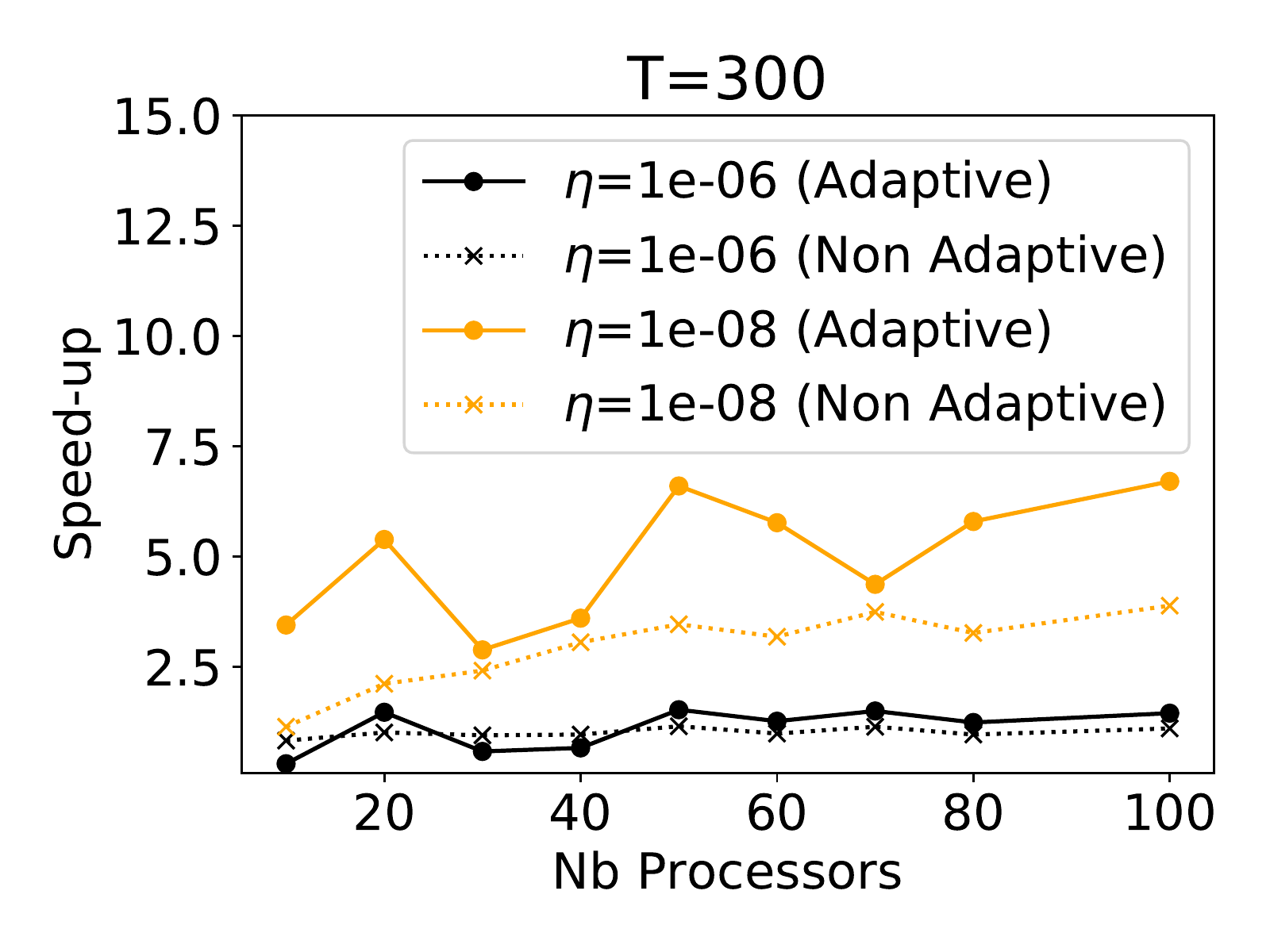}
        \label{fig:spT300}
    \end{subfigure}
    
    \begin{subfigure}[b]{0.45\textwidth}
        \includegraphics[width=\textwidth]{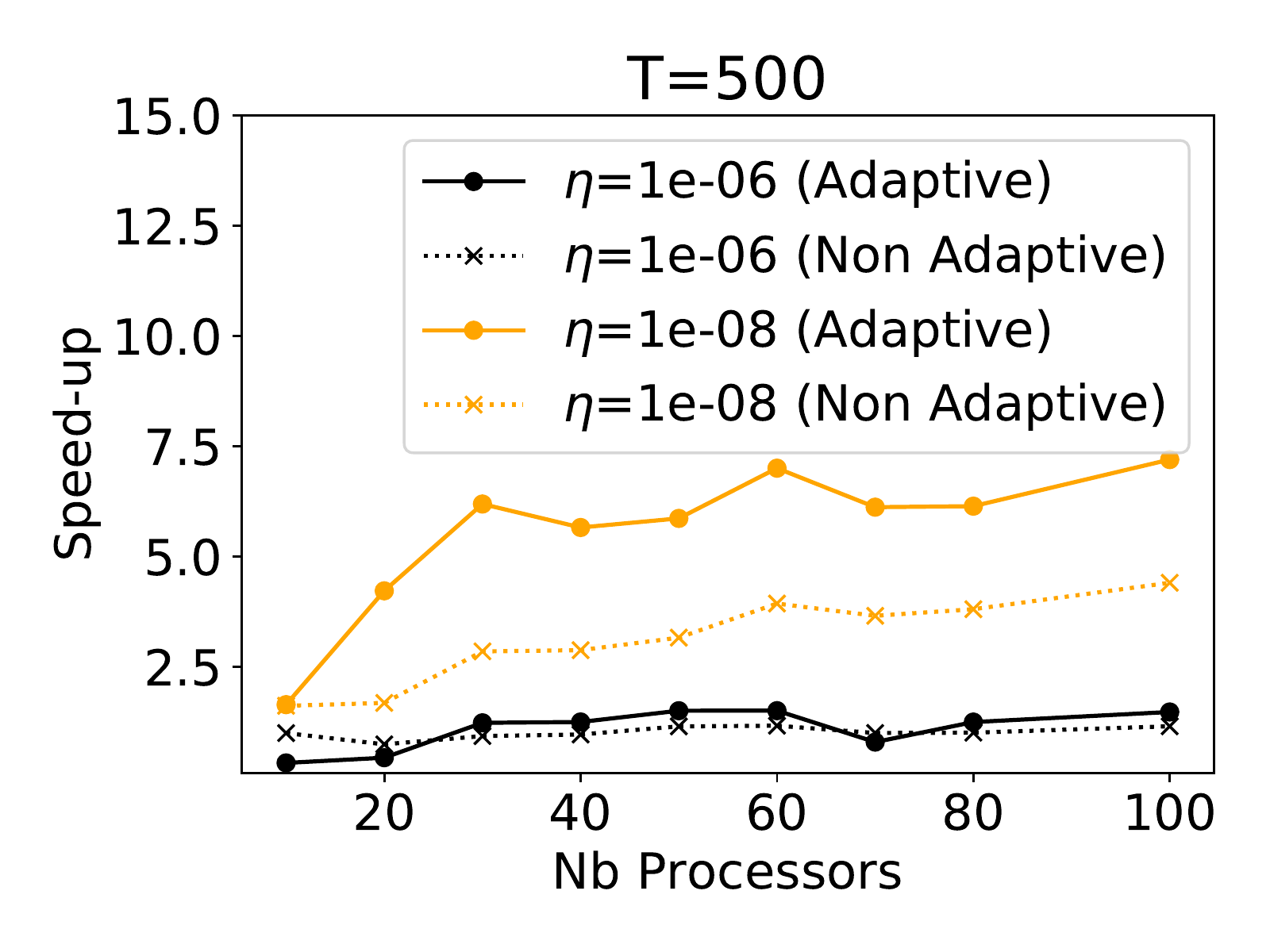}
        \label{fig:spT500}
    \end{subfigure}
    ~
    \begin{subfigure}[b]{0.45\textwidth}
        \includegraphics[width=\textwidth]{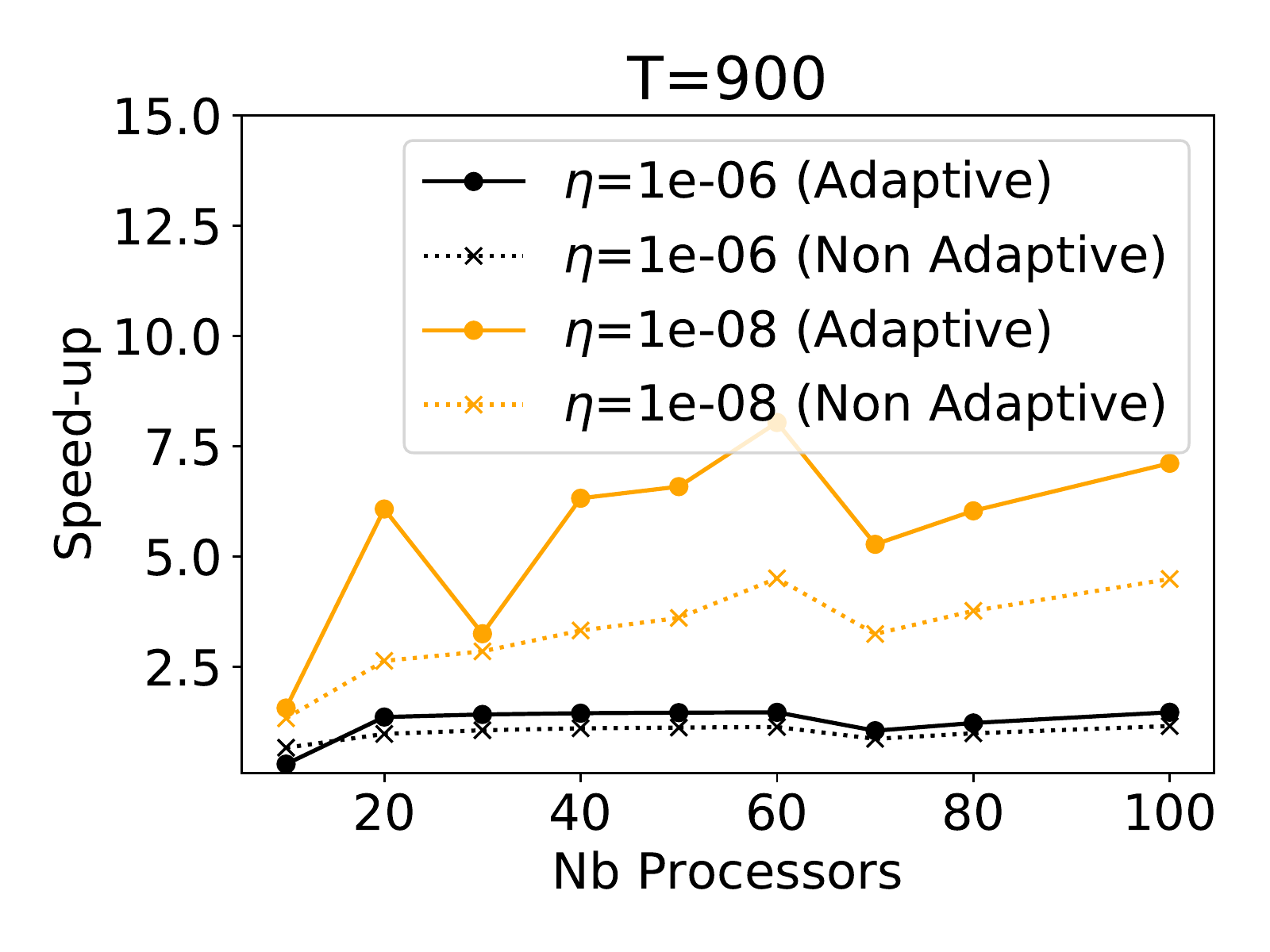}
        \label{fig:spT900}
    \end{subfigure}
    \caption{Speed-up \corr{in comparison to running a sequential fine solver} as a function of the number of processors $\NI$. Dashed lines: classical parareal. Continuous lines: Adaptive parareal.}\label{fig:speedup}
\end{figure}

\begin{figure}
\begin{center}
\includegraphics[width=0.45\textwidth]{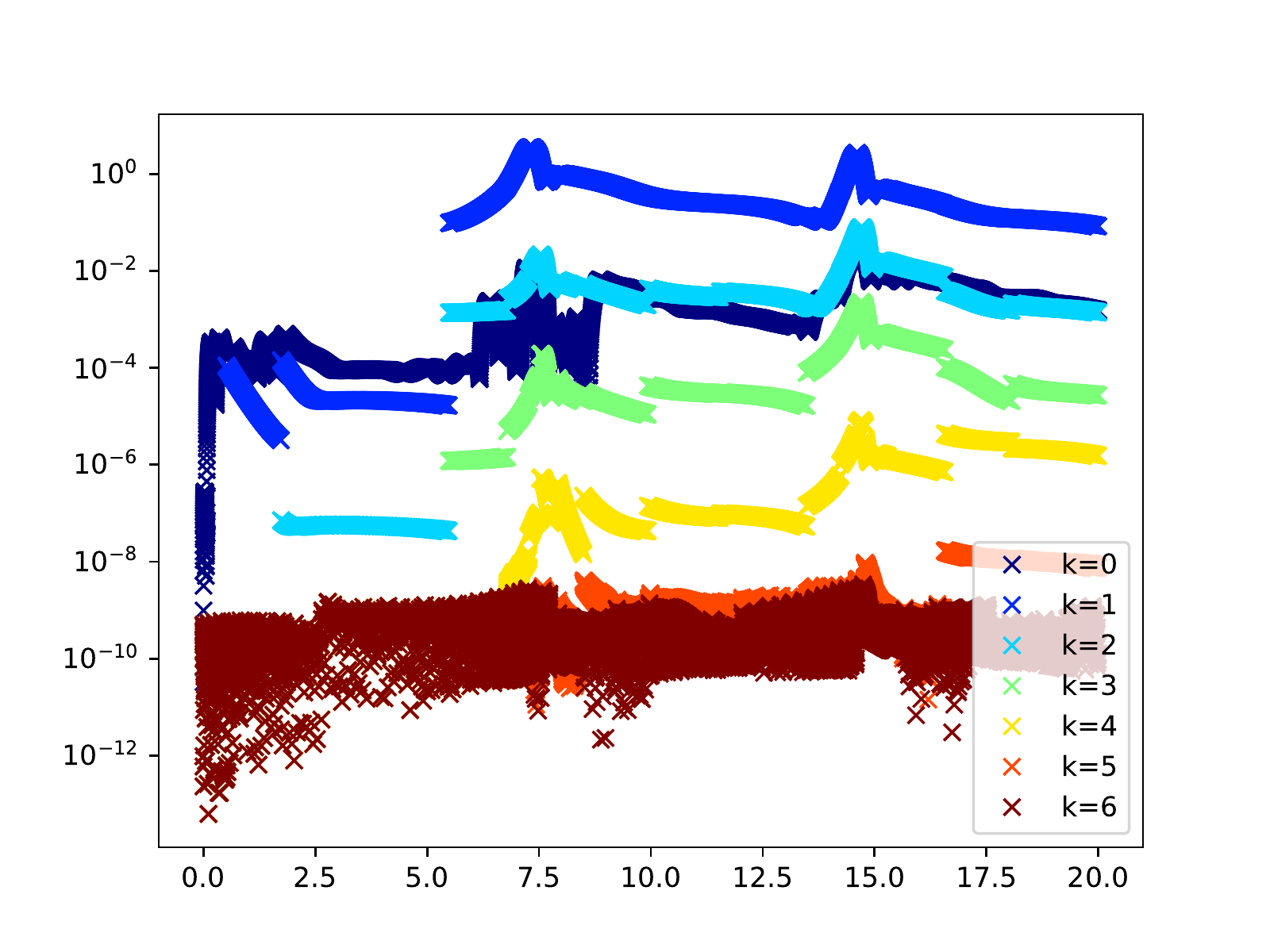}
\includegraphics[width=0.45\textwidth]{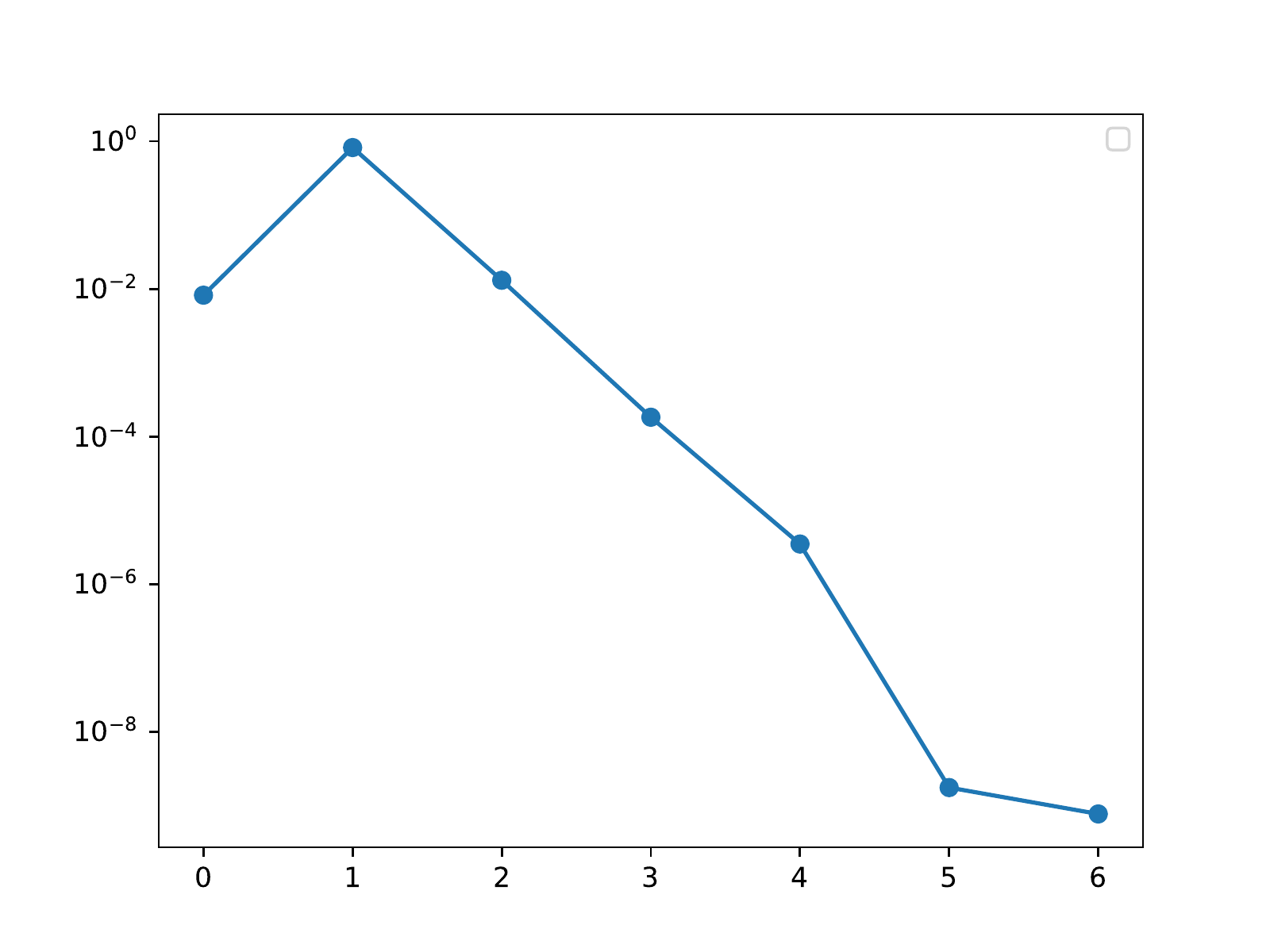}
\\
\includegraphics[width=0.45\textwidth]{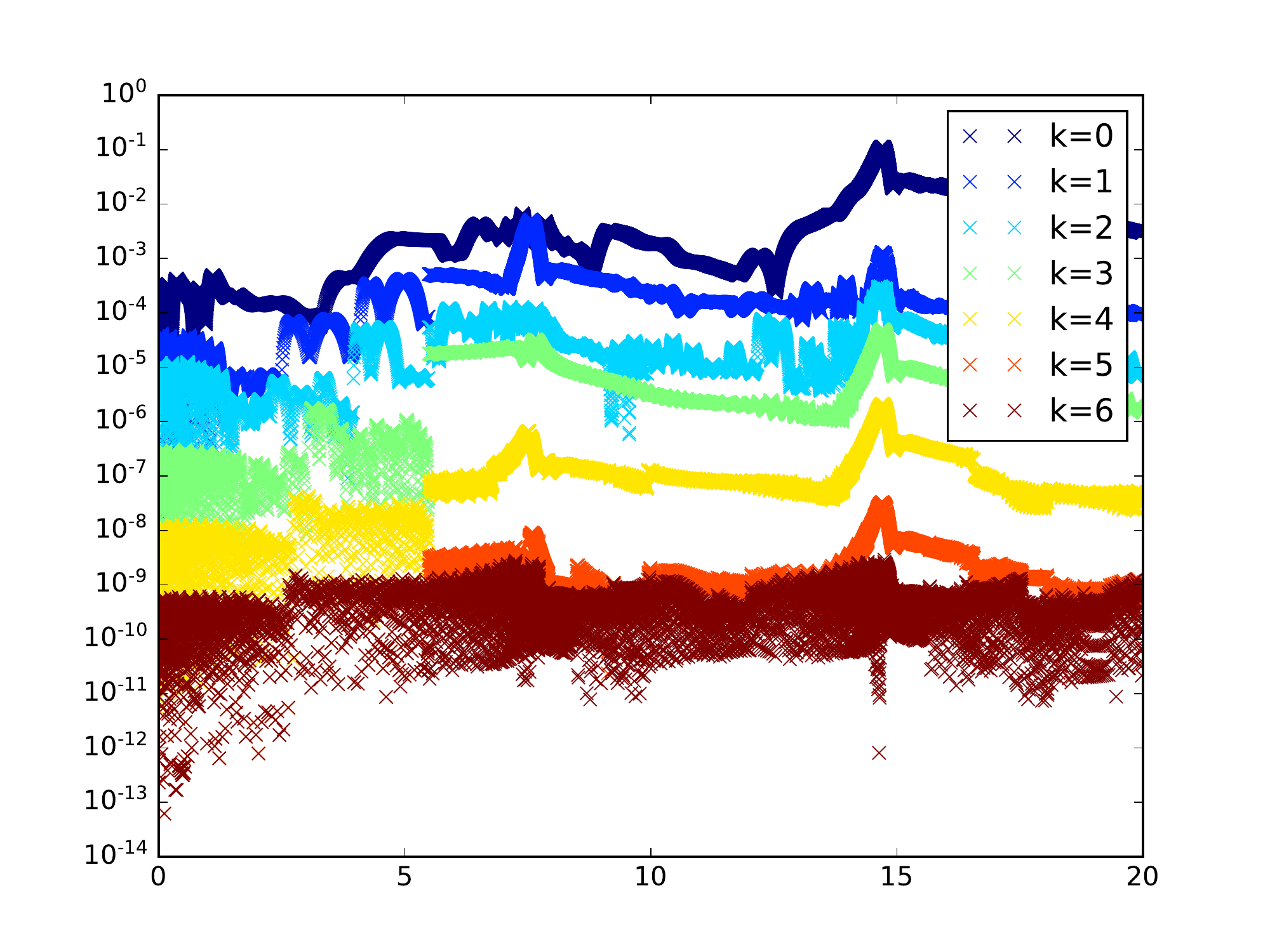}
\includegraphics[width=0.45\textwidth]{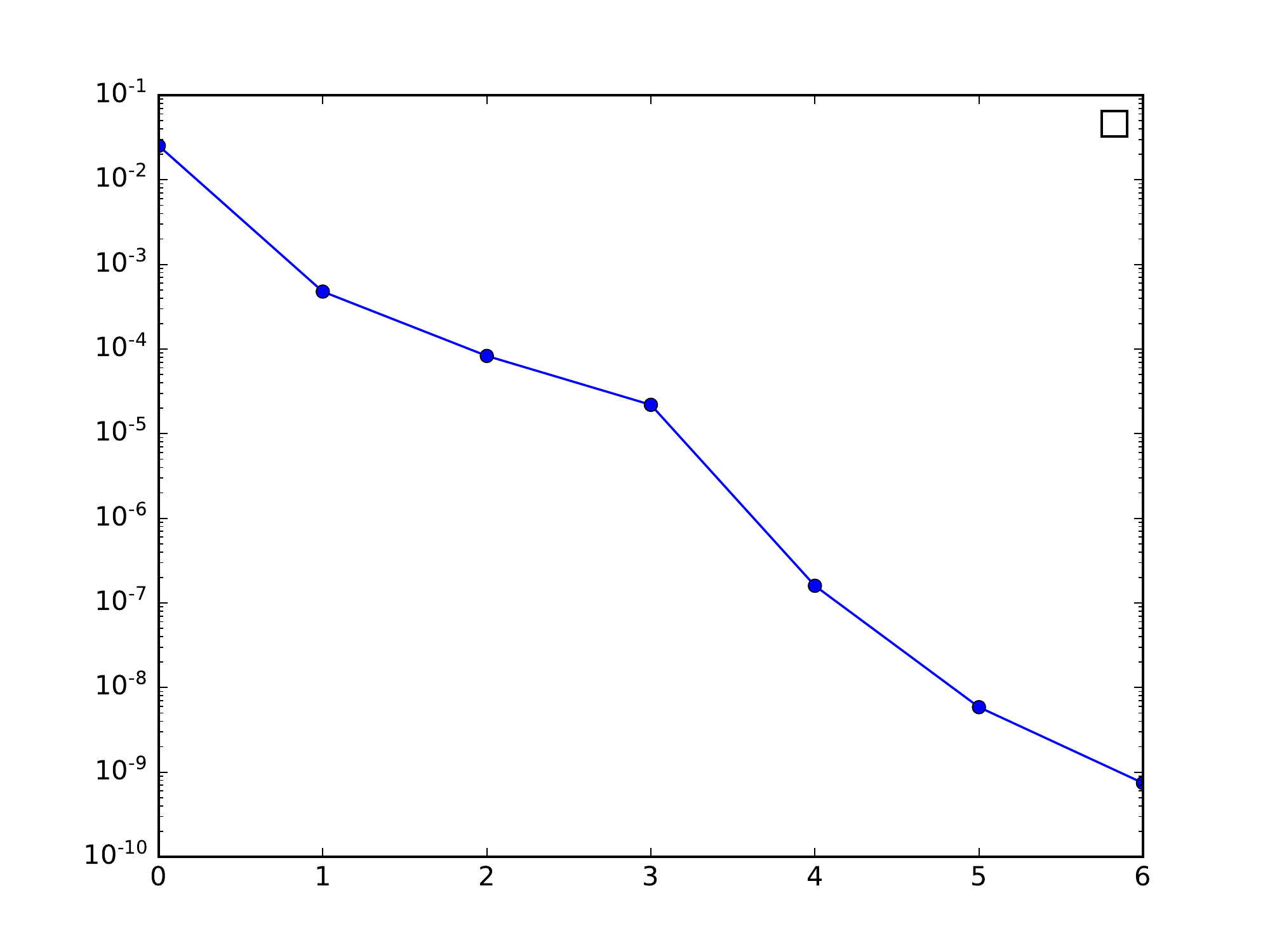}
\end{center}
\caption{\textbf{Brusselator:} Convergence history of the errors \corr{for $T=\corr{20}$, $\eta = 10^{-8}$ and $\NI=\corr{20}$}. Top: classical parareal. Bottom: adaptive parareal.
Left: errors  \corr{of the fine solver at every fine time-step. Right: maximum parareal error at each iteration $k$}.}
\label{fig:conv}
\end{figure}

The values significantly differ from the range of full scalability and we next explain why this is mainly due to the cost of the coarse solver. Since the problem is stiff and we consider relatively long time intervals, it has been necessary to use a sufficiently accurate coarse solver. This explains our choice of an explicit Runge-Kutta scheme of order 5. To illustrate the impact of its cost, let us fix $T=500$, $\eta = 10^{-8}$ and $\NI=50$ (other parameters would yield similar conclusions). We compare the speed-up and efficiency when we count or do not count the cost of the coarse solver in Table \ref{tab:sp}. Obviously, when we do not count the cost of the coarse solver, the performance of both algorithms improves but it is particularly increased in the case of the adaptive version. If the cost of $\cG$ was negligible, it would deliver a very satisfactory efficiency of $75.52\%$. This is five times larger than what the classical parareal would yield. This analysis illustrates that the major obstacle to achieve competitive scalabilities is no longer the cost of the fine solver like in the classical version, but the cost of the coarse propagator.

\begin{table}
\begin{center}
\begin{tabular}{ | c | c | c |}
  \hline			
   \textbf{Speed-up} &  Classical parareal  & Adaptive Parareal \\
  \hline
  With cost $\cG$ & 4.06 & 7.38 \\
  \hline
  Without cost $\cG$ & 7.38 & 37.76 \\
  \hline  
\end{tabular}
\end{center}
~
\begin{center}
\begin{tabular}{ | c | c | c |}
  \hline			
   \textbf{Efficiency} &  Classical parareal  & Adaptive Parareal \\
  \hline
  With cost $\cG$ & 8\% & 14.76\% \\
  \hline
  Without cost $\cG$ & 14.76\%  & 75.52\% \\
  \hline  
\end{tabular}
\end{center}
\caption{\textbf{Brusselator:} Impact of the cost of the coarse solver. Speed-up and efficiency with \corr{$T=500$, $\eta = 10^{-8}$ and $\NI=50$}.}
\label{tab:sp}
\end{table}

We next give some insight on the differences in the convergence behavior of both algorithms. We fix $T=\corr{20}$, $\eta = 10^{-8}$ and $\NI=\corr{20}$ and plot in Figure \ref{fig:conv} the convergence history of the parareal solution in terms of:
\begin{itemize}
\item the errors of the fine solver at every fine time-step
\item the maximum error of the parareal solution at the macro-intervals
$$
\max_N \Vert u(T_N)-y^N_k \Vert
$$
\end{itemize}
Note that the maximum error in the adaptive scheme steadily decreases to the desired accuracy whereas the error in the classical scheme degrades at iteration $k=1$ before converging. This type of behavior has been observed for all other configurations and we conjecture that an important difference in accuracy between the coarse and the fine solver at early stages of the algorithm may be the cause. Finally, an inspection of the error of the fine solver shows that the adaptive algorithm succeeds to reduce the error at every time $t$ in a much more uniform way than the classical algorithm.

\subsubsection{\rev{The Van der Pol oscillator}}
We next consider the Van der Pol oscillator
\begin{align*}
\begin{cases}
x' = y \\
y' =\mu(1-x^2)y -x,
\end{cases}
\end{align*}
with initial condition $x(0)=2$ and $y(0)=0$. When $\mu=0$, this equation is a simple nonstiff harmonic oscillator. When $\mu>0$, the system has a limit cycle and becomes stiffer and stiffer as its value is increased. For our tests, we set $\mu =4$ which is already a relatively stiff case.

Like in the example of the Brusselator system, we set $\eps_{\cG}=0.1$ for the coarse solver and use an explicit Runge Kutta method of order 5 with an adaptive time-stepping (see \cite{DP1980}). For the fine solver, we use the implicit Runge-Kutta method of the Radau IIA family of order 5 with adaptive time-stepping.

In Figure \ref{fig:speedup-vdp}, we plot the obtained speed-up for different configurations:
\begin{itemize}
\item the final time $T$ is $1000$ or $2000$,
\item the final target accuracy is $\eta=10^{-6}$ or $\eta=10^{-8}$,
\item the number of processors $\NI$ varies from 10 to 100.
\end{itemize}
Like in the previous example, the adaptive algorithm outperforms the nonadaptive version in terms of speed-up. However, the gain is marginal for moderate accuracies $\eta=10^{-6}$. For high accuracy $\eta=10^{-8}$, the adaptive algorithm improves the speed-up by a factor of about 2 to 3 times with respect to the classical one. The improvement is more significant for large $T$.

\begin{figure}[h!]
    \centering
    \begin{subfigure}[b]{0.45\textwidth}
        \includegraphics[width=\textwidth]{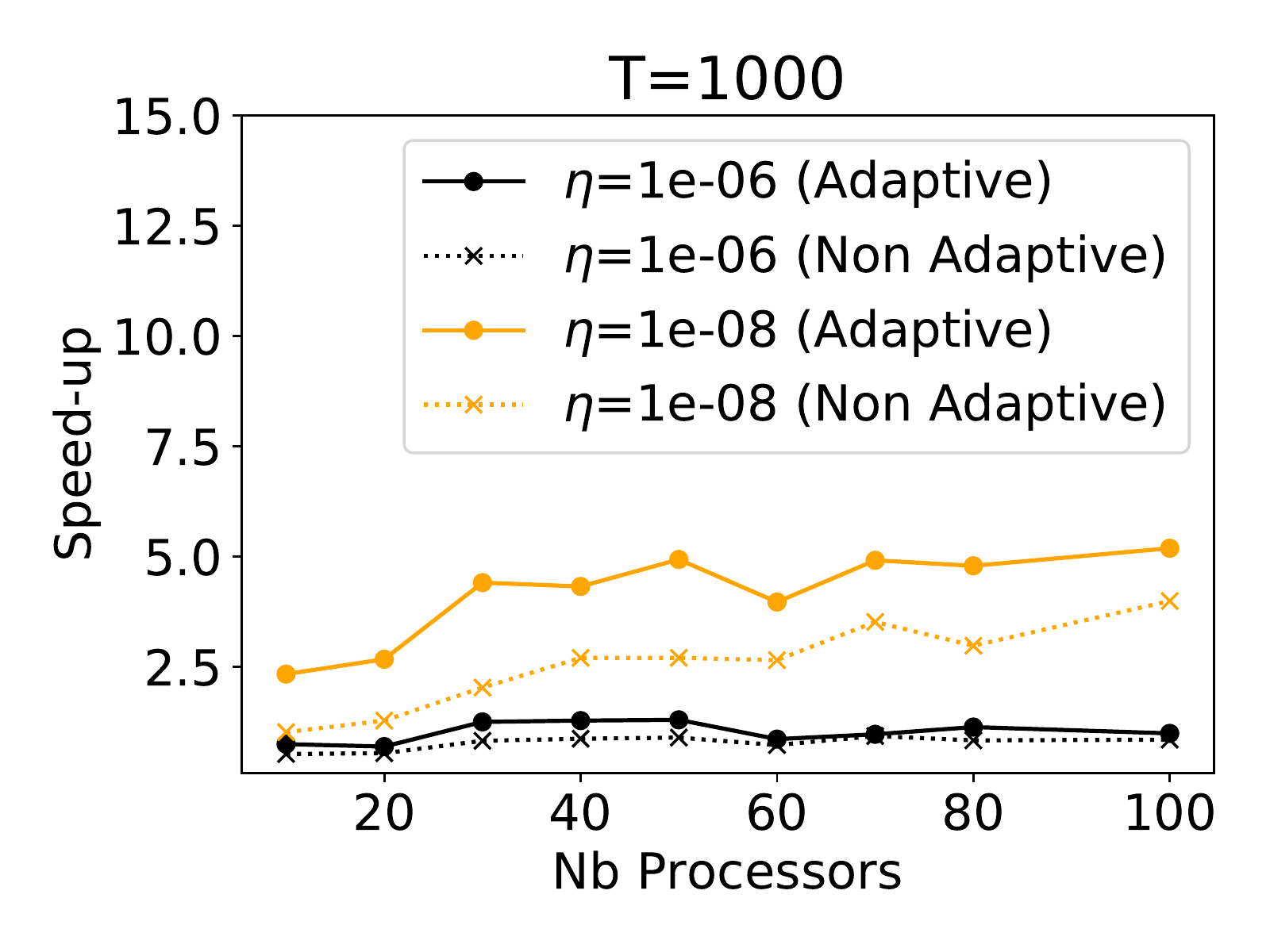}
        \label{fig:spT1000}
    \end{subfigure}
    ~ 
    \begin{subfigure}[b]{0.45\textwidth}
        \includegraphics[width=\textwidth]{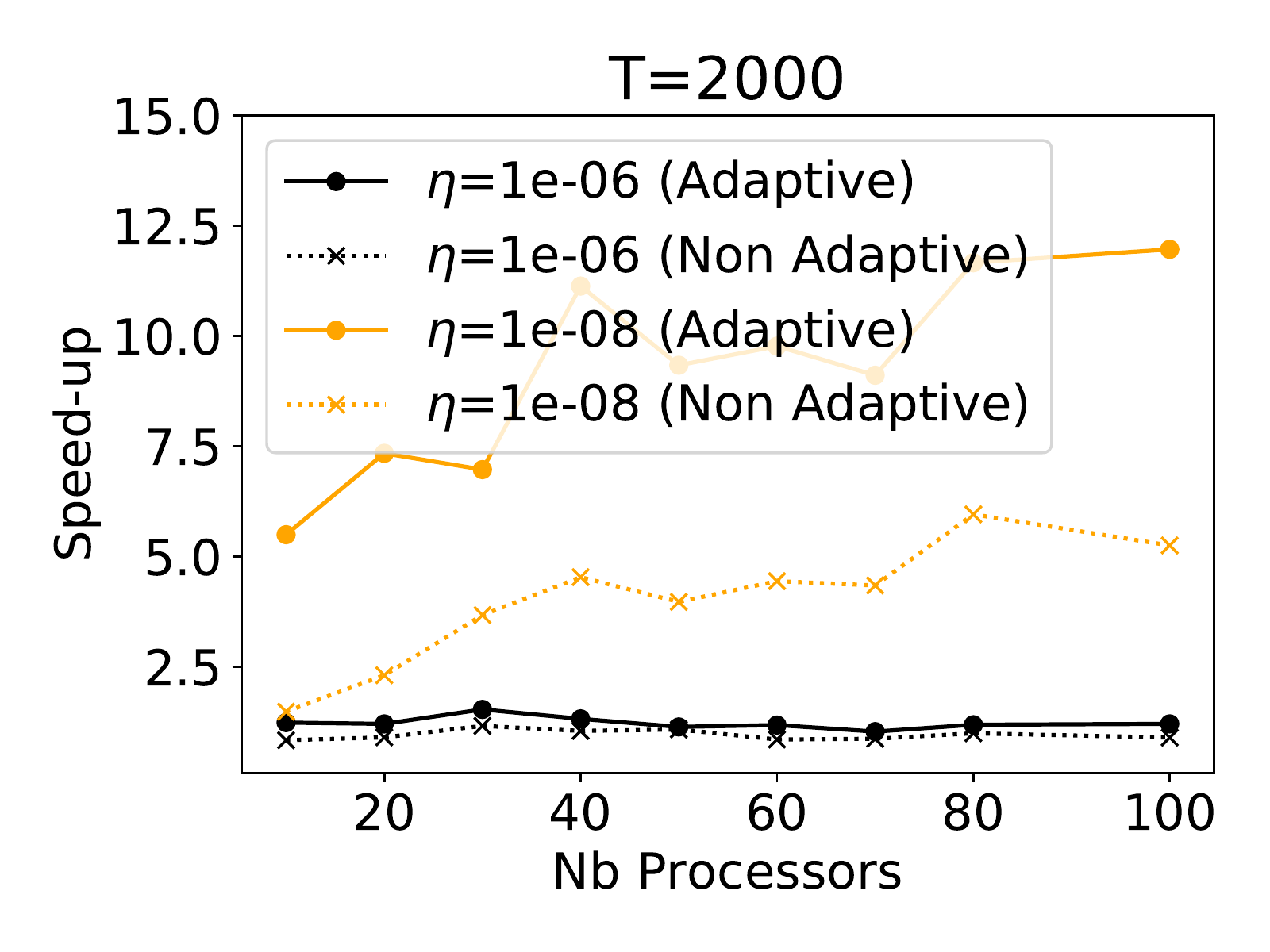}
        \label{fig:spT2000}
    \end{subfigure}
    \caption{\textbf{Van der Pol:} speed-up \corr{in comparison to running a sequential fine solver} as a function of the number of processors $\NI$. Dashed lines: classical parareal. Continuous lines: Adaptive parareal.}
    \label{fig:speedup-vdp}
\end{figure}

In Table \ref{tab:sp-vdp}, we illustrate that the coarse solver is again the main bottleneck to reach high parallel efficiency in the adaptive algorithm: we examine the speed-up and efficiency for $T=2000$, $\eta = 10^{-8}$ and $\NI=40$ when we take and do not take into account the cost of the coarse solver.

\begin{table}
\begin{center}
\begin{tabular}{ | c | c | c |}
  \hline			
   \textbf{Speed-up} &  Classical parareal  & Adaptive Parareal \\
  \hline
  With cost $\cG$ & 4.54  & 11.14 \\
  \hline
  Without cost $\cG$ & 6.61 & 32.63 \\
  \hline  
\end{tabular}
\end{center}
~
\begin{center}
\begin{tabular}{ | c | c | c |}
  \hline			
   \textbf{Efficiency} &  Classical parareal  & Adaptive Parareal \\
  \hline
  With cost $\cG$ & 11.35\% & 27.8\% \\
  \hline
  Without cost $\cG$ & 16.5\%  & 81.56\% \\
  \hline  
\end{tabular}
\end{center}
\caption{\textbf{Van der Pol:} Impact of the cost of the coarse solver. Speed-up and efficiency with \corr{$T=2000$, $\eta = 10^{-8}$ and $\NI=40$}.}
\label{tab:sp-vdp}
\end{table}

\subsubsection{\rev{Discussion on extremely challenging cases of highly stiff ODEs: Oregonator and SEIR epidemic model}}
\label{sec:extreme}
In extreme cases, the cost of the coarse solver may prevent any speed-up at all (see section \ref{sec:extreme}) and puts this obstruction at the forefront for future research. A prominent case are highly stiff ODEs. In the provided code, we can observe this fact in the case of the Oregonator system of equations. In view of the ongoing pandemic of COVID-19 at the time when this article was written, we have also tested an SEIR epidemic model which has very recently been proposed in \cite{covid-model-2020} as a simple model for the spread of the virus in the Wuhan city area. The model captures the effect of the presence of individual reaction to the risk of infection and governmental action. If we run the parareal algorithm for the latter model, an integrator of type LSODA \cite{Petzold1983} for both coarse and fine solvers seems well adapted since it alternates between Adams or BDF integration for nonstiff and stiff parts and it has automatic stiffness detection. To reach a target accuracy $\eta=5.10^{-6}$, it has been necessary to set $\eps_{\cG} = 5.10^{-2}$, making the cost of the coarse solver too expensive to yield any parallel efficiency.  However, if we could find an inexpensive coarse solver, perhaps based on empirical data or on good back-of the enveloppe calculations, our adaptive parareal algorithm would yield interesting speeds-ups. For example, with $N=10$ processors, we would get a speed-up of $3.79$ (versus $1.4$) in the nonadaptive version.

\section{Conclusions and perspectives}
The new adaptive formulation of the parareal algorithm opens the door to improve significantly the parallel efficiency of the method provided that the cost of the coarse solver is moderate. The increasing target tolerances which have to be met at each step \ym{allows} to use online stopping criteria involving {\sl a posteriori} estimators. The developed methodology remains theoretical since we have not quantified the impact of communication delays between processors nor potential memory issues (note however that the load balancing is devised with the purpose of equilibrating tasks and memory). In the framework of the ANR project  ``Cin\'e-Para (ANR-15-CE23-0019)'', we are working on these issues that are of a different level of theory and involve different collaborators. This will be the topic of another paper. In addition to this, several extensions based on the current findings are subject of ongoing works, in particular, the coupling of the adaptive parareal with adaptive space-time schemes, the coupling of parareal with internal iterative solvers like in the discussion of section \ref{sec:APode} of the appendix, and also the development of inexpensive coarse solvers.

\section*{Acknowledgment}
This work was funded by the ANR project  ``Cin\'e-Para'' (ANR-15-CE23-0019).

\appendix

\section{Enriching the input information with previous iterations}
\label{sec:enrich-input}
Usually,  solvers to realize $[\cE(T_N,\Delta T,y^N_k),\zeta^N_k]$  are built using only
$\bI^N_k=\{ T_N, \Delta T, y^N_k\}$
as input information. We account for this idea with the notation
\begin{equation*}
\cS(T_N,\Delta T,y^N_k) \xrightarrow[(\bI^N_k,\cost^N_k)]{} [\cE(T_N,\Delta T,y^N_k);\zeta^N_k],
\end{equation*}
and the numerical cost, denoted $\cost^N_k$, increase as $\zeta^N_k$ is tightened. In this section, we discuss how to enhance the gain in efficiency of the adaptive algorithm by discussing ways to increase the accuracy of the solver $[\cE(T_N,\Delta T,y^N_k),\zeta^N_k]$  across the iterations while maintaining the cost to realize it as independent as possible from $\zeta^N_k$, $N$, and $k$. For this, one possibility is to enrich $\bI^N_k$ with data produced during the previous parareal iterations (although it would of course be at the cost of increasing the storage requirements).

Let $\bP^N_k$ denote the intermediate information that has been produced at iteration $k$ between $[T_N,T_{N+1}]$ and by $\bP_k \coloneqq \cup_{N=0}^{\NI-1}\bP^N_k$ all the information produced at step $k$. Using
$\widetilde \bI^N_k = \{ \bI_k^N, \bP^{N-1}_k,\dots, \bP^0_k, \bP_{k-1},\dots,\bP_0\}$,
as input information, the idea is to see whether it could be possible to find a solver $\cS$ such that
\begin{equation}
\label{eq:reduceComp}
\cS(T_N,\Delta T,y^N_k) \xrightarrow[(\widetilde \bI^N_k,\cost)]{} [\cE(T_N,\Delta T,y^N_k);\zeta^N_k]
\end{equation}
with a constant and small complexity $\cost$. Note that using the enriched set of information $\widetilde \bI^N_k$ means that we want to learn from the previous approximations of $u(T_{N+1})$ given by $[\cE(T_N,\Delta T,y^N_p),\zeta^N_p]$, $0\leq p\leq k-1$, to start the current algorithm closer to $u(T_{N+1})$. After each parareal iteration, we thus improve the accuracy, without increasing the work for solving because we start from a better input, accumulated from the previous parareal iterations.

In the rest of this section, we describe three relevant scenarios where we can approximate $\cE(T_N,\Delta T,y^N_k)$ by trying to build a scheme in the spirit of \eqref{eq:reduceComp}. The first two examples have already been presented in the literature and concern the coupling of parareal with spatial domain decomposition (section \ref{sec:ddm}) and with iterative high-order time integration schemes (section \ref{eq:sdc}). In these two cases, there is to date no complete convergence analysis since it remains to show that i) $\cE(T_N,\Delta T,y^N_k)$ is approximated with accuracy $\zeta_k$ and ii) the \cost~of the solver is really constant through the parareal steps. In addition to these two applications, we mention a third scenario where the convergence analysis can be fully proven. It concerns the solution of time-dependent problems involving internal iterative schemes at every time step. This idea was first analyzed in \cite{mulaPhD} in a restricted setting. \corr{It has also been applied in the framework of the MGRIT algorithm that couples parareal with multigrid iterative schemes (see \cite{FMOS2017})}. In section \ref{sec:APode}, we give the main setting and defer the analysis for a forthcoming paper.

\subsection{Parareal coupled with spatial domain decomposition}
\label{sec:ddm}
Here, we consider a solver $\cS=\DDM$ which involves spatial domain decomposition over $[T_N,T_{N+1}]$ ~\cite{guetatPhD, ABGM2017}. We assume that $\DDM$ involves a time discretization with \ym{a} small time step $\delta t < \Delta T$. Let $\nfi$ be the number of time steps on each interval $[T_N, T_{N+1}]$ so that we have the relations
$\Delta T =\nfi\delta t$ and $T = \NI \Delta T = \NI\ \nfi\delta t$ and the total number of time steps in $[0,T]$ is $\nFine \coloneqq \NI\ \nfi$.
The domain decomposition iterations act on a partition $\Omega=\cup_{l=1}^L \Omega_l$ of the domain. For $0\leq n\leq \nfi$, we denote by $u^{N,n,j}_k$ the solution produced by $\DDM$ at time $t=T_N+n\delta t$ after $j\geq0$ domain decomposition iterations. The notation $J^*$ will denote the last iteration (fixed according to some stopping criterion). At $j=0$, these iterations need to be initialized at the interfaces $\partial \Omega_l,\ 1\leq l \leq L$. The idea explored in, e.g.,~\cite{guetatPhD, ABGM2017}, is to take the values $u^{N,n,J^*}_{k-1}\vert_{\partial \Omega_l}$ at these interfaces as a starting guess for $0\leq n\leq \nfi$ so that
\begin{equation*}
\quad 
\widetilde \bI_k^N = \{\bI^N_k, \{ u^{N,n,J^*}_{k-1}\vert_{\partial \Omega_l}, 0\leq n \leq \nfi\} \}.
\end{equation*}
From \cite{guetatPhD, ABGM2017}, there is numerical evidence that the computations of $\DDM(T_N,\Delta T,y^N_k)$ using $\widetilde \bI_k^N$ yield $ [\cE(T_N,\Delta T,y^N_k);\zeta^N_k]$ after a  reduced number of iterations $J^*$ which is independent of $k$. Thus \cost~would be kept constant and
\begin{equation*}
u^{N,\nfi,J^*}_k=\DDM(T_N,\Delta T,y^N_k) \xrightarrow[(\widetilde \bI^N_k,\cost)]{} [\cE(T_N,\Delta T,y^N_k);\zeta^N_k],
\end{equation*}
\ym{where the above ``\cost'' is much small than the cost of the fine solver}.
\subsection{Parareal coupled with iterative high-order time integration schemes}
\label{eq:sdc}
Spectral Deferred Correction (SDC, \cite{DGR2000}) is an iterative time integration scheme. Starting from an initial guess of $u(t)$ at discrete points, the method adds successive corrections to this guess. The corrections are found by solving an associated evolution equation. Under certain conditions, the correction at every step increases by one the accuracy order of the time discretization.

We carry here a simplified discussion on how to build $[\cE(T_N,\Delta T, y^N_k);\zeta^N_k]$ when $\cS=\SDC$ and connect it to the so-called Parallel Full Approximation Scheme in Space-Time (PFASST, \cite{Minion2011,EM2012}). For a given time interval $[T_N,T_{N+1}]$, \ym{let us}  consider its $\nfi+1$ associated Gauss-Lobatto points $\{t_{N,n}\}_{n=0}^{\nfi}$ and quadrature weights $\{\omega_{n}\}_{n=0}^{\nfi}$. The Gauss-Lobatto points are such that $t_{N,0}=T_N$ and $t_{N,\nfi}=T_{N+1}$. \ym{Let us} denote $u^{N,n,j}_k$ the approximation of $u(t_{N,n})$ at parareal iteration $k$ after $j\geq0$ SDC iterations. Assuming that one uses an implicit time-stepping scheme  to solve the corrector equations involved in this method, $u^{N,n+1,j}_k$ is given by
\begin{align*}
u^{N,n+1,j}_k &= u^{N,n,j}_k +(t_{N,n+1}-t_{N,n})\left(\cA(t_{N,n+1},u^{N,n+1,j}_k)-\cA(t_{N,n+1},u^{N,n+1,j-1}_k)\right)\\
\quad & +\sum_{m=0}^{\nfi} \omega_m \cA(t_{N,m},u^{N,m,j-1}_k), \quad 1\leq j,\ 0\leq n \leq\nfi-1, \\
u^{N,0,j}_k &=y^N_k \\
u^{N,n,0}_k &\text{ given for $0\leq n \leq \nfi$.}
\end{align*}
To speed-up computations, one of the key elements is the choice of the starting guesses $u^{N,n,0}_k,\ 0\leq n\leq \nfi$. Without entering into very specific details, the PFASST algorithm is a particular instantiation of the above scheme when $J^*=1$ and $u^{N,n,0}_k$ uses information produced at the previous parareal iteration $k-1$. Therefore, PFASST falls into the present framework in the sense that it produces
\begin{equation*}
u^{N,\nfi,1}_k=\SDC(T_N,\Delta T, y^N_k)
\end{equation*}
with
\begin{equation*}
\widetilde \bI_k^N = \{T_N, \Delta T, y^N_k, \bP_{k-1} \}
\end{equation*}
and it is expected that $u^{N,\nfi,1}_k=[\cE(T_N,\Delta T, y^N_k);\zeta^N_k]$.

An additional component of PFASST is that the algorithm also tries to improve the accuracy of the coarse solver $\cG$ using SDC iterations built with $\widetilde \bI_N^k$. This has not been taken into account in our adaptive parareal algorithm \eqref{eq:paraAP}.

\corr{Another algorithm that progressively improves the quality of the fine solver across iterations is MGRIT, which couples parareal with multigrid iterative methods (see \cite{FFKMS2014, MSBER2015}. The PFASST algorithm has also been coupled with multigrid techniques in \cite{MSBER2015}.}

\subsection{Coupling with internal iterative schemes}
\label{sec:APode}
Any implicit discretization of problem  \eqref{eq:pde} leads to discrete linear or nonlinear systems of equations which are often solved with iterative schemes. When they are involved as internal iterations within the parareal algorithm, one could try to speed them up by building good initial guesses based on information from previous parareal iterations. We illustrate this idea in the simple case where:
\begin{itemize}
\item $\cA(t,\cdot)$ is a linear differential operator in $\bU$ complemented with suitable boundary conditions,
\item we use an implicit Euler scheme for the time discretization.
\end{itemize}
A sequential solution of problem \eqref{eq:pde} with the implicit Euler scheme goes as follows. At each time $t^{N,n}=T_N+n\delta t$, the solution $u(t^{N,n})$ is approximated by the function $u^{N,n}\in \bU$ which is itself the solution to
\begin{equation*}
\cB(u^{N,n}) = g^{N,n},
\end{equation*}
where $g^{N,n} = u^{N,n-1} + \delta t f(t^{N,n})$ and
\begin{equation*}
\cB(v) \coloneqq v+\delta t \cA(t^{N,n},v),\quad \forall v \in \bU.
\end{equation*}
Note that $\cB$ depends on time but our notation does not account for it in order not to overload the notations.

After discretization of $\bU$, the problem classically reduces to solving a linear system of the form
\begin{equation*}
\textbf{B} \bar u^{N,n} = \bar g^{N,n},
\end{equation*}
for the unknown $\bar u^{N,n}$ in some discrete subspace $S$ of $\bU$. Usually, the above system is solved either by means of a conjugate gradient method or by a Richardson iteration of the form
\begin{equation*}
\begin{cases}
&\bar u^{N,n,j} = (\textbf{Id} + \omega \textbf{PB}) \bar u^{N,n,j-1}+\omega \textbf{P} \bar b,\quad j\geq 1 \\
&\bar u^{N,n,0} \in S \text{ given.}
\end{cases}
\end{equation*}
Here, $\omega$ is a suitably chosen relaxation parameter and $\textbf{P}$ can be seen as a pre-conditioner. The internal iterations $j$ are stopped whenever a certain criterion is met (it could be an a posteriori estimator) and we denote by $J_{N,n}$ their final number. Obviously, $J_{N,n}$ depends on the starting guess for which a usual choice is to take the solution at the previous time, that is
\begin{equation*}
\bar u^{N,n,0} = \bar u^{N,n-1,J_{N,n-1}}.
\end{equation*}
In order to achieve our goal, i.e. maintaning a low cost while increasing the accuracy at each parareal step, we  can now reuse information from previous parareal iterations for the starting guess. In \cite{mulaPhD}, two options are explored. The first is
\begin{align*}
&
\begin{cases}
\bar u^{N,n,0}_k = \bar u^{N,n-1,J_{N,n-1,k}}_k, \quad &\text{if $k=0$} \\
\bar u_k^{N,n,0} =  \bar u_{k-1}^{N,n,J_{N,n,k-1}}, \quad &\text{if $k\geq 1$},
\end{cases}\\
\intertext{and the second, less natural choice,}
&
\begin{cases}
\bar u^{N,n,0}_k = \bar u^{N,n-1,J_{N,n-1,k}}_k, \quad &\text{if $k=0$} \\
\bar u_k^{N,n,0} =  \bar u_{k-1}^{N,n,J_{N,n,k-1}}  + \bar u_k^{N,n-1,J_{N,n,k}}- \bar u_{k-1}^{N,n-1,J_{N,n-1,k-1}} , \quad &\text{if $k\geq 1$}.
\end{cases}
\end{align*}
In the first case, we take over the internal iterations at the point where they were stopped in the previous parareal iteration $k-1$. In addition to this, in the second case, the term $u_k^{N,n-1,J_{N,n,k}}- u_{k-1}^{N,n-1,J_{N,n-1,k-1}}$ tries to better take the dynamics of the process into account. Note that the use of solutions that have been produced in the previous parareal iterations is at the expense of additional memory requirements. It might also be at the cost of a certain increase in the complexity locally at certain times. However, \cite{mulaPhD} shows in a restricted setting that these starting guesses (in particular the second) have interesting potential to enhance the speed-up of the parareal algorithm. A general theory on this aspect will be presented in a forthcoming work.

\bibliographystyle{siamplain}
\bibliography{references}
\end{document}